\newcommand{\al}{\alpha}
\newcommand{\ie}{{\em i.e. }}
\newcommand{\eg}{{\em e.g. }}
\newcommand{\ZZ}{\mathbb{Z}}
\def\eqref#1{(\ref{#1})}
\newcommand{\R}{{\mathbb R}}
\newcommand{\Q}{{\mathbb Q}}
\def\1{\sqrt{-1}\:}
\newcommand{\restrict}[1]{{\left|_{{\phantom{|}\!\!}_{#1}}\right.}}
\newcommand{\cntrct}                
{\hspace{2pt}\raisebox{1pt}{\text{$\lrcorner$}}\hspace{2pt}}
\newcommand{\cac}{\mathcal{C}}
\newcommand{\caf}{\mathcal{F}}
\renewcommand{\bar}{\overline}
\renewcommand{\phi}{\varphi}
\renewcommand{\epsilon}{\varepsilon}
\renewcommand{\geq}{\geqslant}
\newcommand{\SL}{\operatorname{SL}}
\numberwithin{equation}{section}
\newcounter{Mycounter}[section]
\newcounter{lemma}[section]
\newcounter{claim}[section]
\newcounter{sublemma}[section]
\newcounter{corollary}[section]
\newcounter{theorem}[section]
\newcounter{conjecture}[section]
\newcounter{proposition}[section]
\newcounter{definition}[section]
\newcounter{example}[section]
\newcounter{remark}[section]
\newcounter{problem}[section]
\newcounter{question}[section]
\begin{document}

\begin{center}
{\LARGE\bf
The spectral sequence of the canonical foliation of a Vaisman manifold
}\\[3mm]
{\large\bf
Liviu Ornea, and
Vladimir Slesar
}
\end{center}

\hfill

{\small
\hspace{0.15\linewidth}
\begin{minipage}[t]{0.7\linewidth}
{\bf Abstract} \\
In this paper we investigate the spectral sequence associated to a
Riemannian foliation which arises naturally on a Vaisman manifold. Using the
Betti numbers of the underlying manifold we establish a lower bound for the
dimension of some terms of this cohomological object. This way we obtain cohomological obstructions for two-dimensional foliations to be induced from a Vaisman structure. We show that if the foliation is quasi-regular the lower bound is realized.  In the final part
of the paper we discuss two examples.
\end{minipage}
}

\hspace{5mm}

\noindent{\bf Keywords:} locally conformally K\"ahler, canonical foliation, Vaisman manifold, spectral sequence.

\noindent {\bf 2000 Mathematics Subject Classification:} { 53C55.}\\[4mm]


\tableofcontents

\date{\today }

\section{Introduction}

A Hermitian manifold $(M,J,g)$ ($\dim_\R M\geq 2)$ is {\em Vaisman} if its fundamental two-form $\omega(X,Y):=g(X,JY)$ satisfies $d\omega=\theta\wedge\omega$ for a non-zero one-form $\theta$ (called the {\em Lee form}) which is parallel with respect to the Levi-Civita connection of the metric. In particular, a Vaisman metric is locally conformally K\"ahler (LCK), see \eg \cite{Dra-Orn}.

A Vaisman metric is a Gauduchon metric ($\partial\bar\partial\omega^{n-1}=0$) and hence, on compact $M$, it is unique, up to homothety, in a given conformal class. But not every conformal class of LCK metrics contains a Vaisman one: \eg the Inoue surfaces do not admit Vaisman metrics, \cite{Bel}.

Most of the compact complex surfaces are LCK, \cite{Br}, and among them, many  are Vaisman: Hopf surfaces of rank 1, Kodaira surfaces etc., see \cite{Bel}. Higher dimensional examples of Vaisman manifolds are the {\em diagonal Hopf manifolds}, see \cite{or_shell}.

Throughout this paper we shall assume $\theta$ is not exact, that is, the manifold is not {\em globally conformally K\"ahler}. In particular,  $b_1(M)\ge 1$.

On a Vaisman manifold $(M,J,g)$, the vector fields metrically equivalent with $\theta$ and respectively $-J\theta$ (called {\em the anti-Lee form}) define a real 2-dimensional distribution $\caf$ which is integrable and generate a Riemannian, totally geosesic and holomorphic foliation \cite{Vai}, usually called {\em the canonical foliation}. It was widely studied, especially in the compact case, \eg in
\cite{Tsu}, where it is used to show  that a direct product of compact Vaisman manifolds
cannot carry LCK metrics.  Geometric properties of this foliation
were discussed in \cite{Che-Pic}, while in \cite{Par} (see also the last
section of this paper)  a comprehensive description of
the canonical foliation is presented for some Hopf surfaces, addressing the transverse structure of
the foliation, a classification of leaves and in some particular cases even
a characterization of the space of leaves.

The purpose of this paper is to investigate \emph{the spectral sequence} $(E_k,d_k)$
of the canonical foliation of a compact Vaisman manifold (for definition see \cite{Al-Ko, Ton}).
The dimensions of the spectral terms $E_k$ with $k\ge 2$ are known to be invariant with respect to a leafwise
homeomorphism (\ie a homeomorphism which sends leaves to leaves), \cite{Al-Mas}. As in the
bundle case, the spectral sequence converges to the de\thinspace Rham cohomology of $M$.

For other spectral sequences defined on a foliated manifolds we refer to \cite{Vai2} for symplectic
foliations and more recently to \cite{Wo} for a spectral sequence related to the transverse geometry. Also, for the particular
case when the transverse distribution is integrable, see \cite{Pop}.

Note that the page  $E_2$ (which
is the most relevant part of the spectral sequence) contains as a subset
the groups of the \emph{basic cohomology} (the spectral terms with 0
leafwise degree). The basic de\thinspace Rham complex has been intensively studied and
in fact can be regarded as the counterpart of the de\thinspace Rham complex for the
space of leaves.

The transverse geometry of a foliation is encoded in its basic cohomology (for closed K\"ahler
manifolds see \eg \cite{Go}, for Sasakian and 3-Sasakian manifolds see \cite{bg}, \cite{Boy-Gal} etc.). The
 formulae which relate the {\em basic Betti numbers} with respect to the canonical foliation of a Vaisman manifold to the usual
Betti numbers of the manifold where given in \cite{Vai}. Moreover, using the minimality of the canonical foliation and the Poincar\'e duality defined in \cite{Mas} it is possible to compute the
dimension of the spectral terms with top leafwise degree 2.

In this paper we go further and compute the dimension of the remaining terms of order 2 in terms of the {basic Betti numbers}. We rely to the  fact that the remaining terms are related to the Lee and anti-Lee  one-forms which  are defined in the leafwise direction.
Together, these two steps give informations about all relevant spectral terms.

Our main tool is  a \emph{Hodge theoretic approach}, devised in this particular setting by
\'Alvarez-L\'opez and Kordyukov \cite{Al-Ko}. We stress that this approach is different from the previous
cohomological method used for computing the spectral sequence (see \emph{e.g.} \cite{Dom}).

On compact Vaisman manifolds, some spectral terms  are
nontrivially related to the basic cohomology groups and  we obtain a lower bound
for their dimension in terms of basic Betti numbers. For quasi-regular foliations (\ie  foliations
with all leaves compact) we show that the lower bound is attained and for a
class of examples we determine the dimension of all spectral terms $E_2$. Note that a Vaisman structure can always be deformed to one with quasi-regular canonical foliation, see \cite{or-ve-imm}.

Our result ca be viewed as a cohomological obstruction for a given foliated structure on a complex manifold to be determined by
some Vaisman structure.

The paper is organized as follows. In Section 2 we present the basic
properties of the Vaisman manifolds and of the spectral sequence associated
to a Riemannian foliation. In Section \ref{section III}, we establish a lower bound for the
dimension of some spectral terms for the canonical foliation of a compact Vaisman manifold. In Section \ref{section IV} we
prove that the lower bound is attained if the foliation is quasi-regular.
In Section 5 we present two examples: one (the diagonal Hopf manifold) which satisfies the obstructions, and another one on which the obstruction is effective.

\section{Preliminaries}

\subsection{Vaisman manifolds}

We  present  the needed back\-ground for Vaisman manifolds. For details, proofs and examples, please see \cite{Dra-Orn} and more recent papers by Ornea and Verbitsky.

Let $(M,J)$ be a complex manifold of real dimension $2n+2$, with $n\ge 1$ (tacitly assumed to be connected, of class $\cac^\infty$).
\begin{definition}
A {\em Vaisman} metric on $(M,J)$ is a Hermitian metric $g$ such that its fundamental two-form $\omega(X,Y):=g(X,JY)$ satisfies the integrability condition:
$$d\omega=\theta\wedge\omega,
$$
for some one-form $\theta$, called {\em Lee form}, which is parallel with respect to the Levi-Civita connection of $g$.

Then $g$ gives a {\em Vaisman structure} on $(M,J)$, and  $(M,J,g)$ is called a {\em Vaisman manifold}.
\end{definition}

\begin{remark}
A Vaisman manifold is locally conformally K\"ahler (LCK), as $\nabla\theta=0$ implies $d\theta=0$.
\end{remark}

\hfill

As $\theta$ is parallel, it can be considered of norm 1. Let $\theta^c:=-J\theta$ be the {\em anti-Lee form} ($\theta^c(X)=\theta(JX)$). The vector fields metrically equivalent with $\theta$ and $\theta^c$ will be denoted by $U$ and $V$, and they are unitary:
$$\theta(U)=1,\quad \theta^c(V)=1,\quad \theta(V)=0,\quad \theta^c(U)=0.$$

As the universal Riemannian cover of a Vaisman manifold is a metric cone of a Sasaki manifold (see \eg \cite{Orn-Ver}),
the local structure of a Vaisman manifold implies the existence of a local
Sasaki structure transverse to the flow generated by $U$. Thus we may  assume
the existence of a local orthonormal frame $\{f_{1},\ldots,f_n,f_{n+1,}\ldots,f_{2n},U,V=\xi \}$, such that
$\xi$ is the Reeb vector field and
\[
J(e_i)=e_{n+i}, \qquad
J(e_{n+i})=-e_i.
\]
The dual frame will be denoted $\{f_{1,}^{\flat },\ldots,f_n^{\flat},f_{n+1,}^{\flat}, \ldots,f_{2n}^{\flat },\theta, \theta^c\dot \}$.

The parallelism of $\theta$ immediately implies:
\begin{lemma} (\cite{Vai}) \label{connect_Vaisman}
The following equations hold on a Vaisman manifold:
\begin{equation}
\begin{split} \label{conex_Vaisman}
\nabla _{e_i}U&=0,  \\
\nabla _{e_i}V&=J(e_i),  \\
\nabla _UU&=\nabla _VU=0,   \\
\nabla _UV&=\nabla _VV=0.
\end{split}
\end{equation}
\end{lemma}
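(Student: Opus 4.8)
The plan is to separate the identities involving $\nabla U$ from those involving $\nabla V$, since the two families follow from different facts. For the first family I would note that the three equations $\nabla_{e_i}U=\nabla_U U=\nabla_V U=0$ are nothing but the single statement $\nabla_X U=0$ for every $X$, and that this is equivalent to the hypothesis $\nabla\theta=0$: because $\theta=g(U,\cdot)$ and $g$ is parallel, one has $(\nabla_X\theta)(Y)=g(\nabla_X U,Y)$, so $\nabla\theta=0$ if and only if $\nabla U=0$. This settles the first and third lines at once, with no computation.

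For the second family I would first identify $V$. From $\theta^c(X)=\theta(JX)=g(U,JX)=-g(JU,X)$ we read off $V=-JU$, hence $JU=-V$ and $JV=U$. Differentiating $V=-JU$ and using $\nabla U=0$ gives $\nabla_X V=-(\nabla_X J)U$ for every $X$, so the entire remaining content of the lemma is exactly the computation of $(\nabla_X J)U$.

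To compute this I would invoke the Hermitian structure identity, valid because $J$ is integrable and its Nijenhuis tensor vanishes: $2g((\nabla_X J)Y,Z)=d\omega(X,Y,Z)-d\omega(X,JY,JZ)$. Substituting $d\omega=\theta\wedge\omega$, setting $Y=U$, and using $\theta(U)=1$, $\theta^c(U)=0$, $g(X,U)=\theta(X)$, $\omega(X,U)=g(X,JU)=-\theta^c(X)$, together with the $J$-invariance relations $\omega(JY,JZ)=\omega(Y,Z)$ and $\theta(JY)=\theta^c(Y)$, the right-hand side collapses to a combination of $\omega(X,Z)$, $\theta(X)\theta^c(Z)$ and $\theta^c(X)\theta(Z)$. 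On a horizontal frame vector $X=e_i$ (where $\theta(e_i)=\theta^c(e_i)=0$) only the $\omega$-term survives and, after fixing the normalization, delivers $\nabla_{e_i}V=Je_i$; on $X=U$ and on $X=V$ the two rank-one terms cancel against the $\omega$-term (using $\omega(U,Z)=\theta^c(Z)$ and $\omega(V,Z)=-\theta(Z)$), giving $\nabla_U V=\nabla_V V=0$.

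The conceptual skeleton is short; the genuine work, and the main place to slip, is the bookkeeping in the third step: expanding $d\omega(X,U,Z)$ and $d\omega(X,JU,JZ)$ and tracking every sign through the contractions so that the horizontal coefficient comes out exactly as stated. That coefficient is the one quantity sensitive to the normalization conventions (the precise form of $\omega=g(\cdot,J\cdot)$, of $d\omega=\theta\wedge\omega$, and of $|\theta|=1$), whereas the vanishing of $\nabla_U V$ and $\nabla_V V$ is forced independently of the normalization. If one wished to bypass the global $\nabla J$ identity altogether, an alternative would be to transport the transverse Sasakian relation $\nabla_X\xi=-\varphi X$ coming from the K\"ahler-cone description of the universal cover; but the direct computation above uses only $\nabla\theta=0$ and $d\omega=\theta\wedge\omega$ and is therefore self-contained.
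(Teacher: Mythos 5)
Your proposal is sound in strategy and actually supplies more than the paper does: the paper gives no proof of this lemma at all, citing \cite{Vai} and prefacing it with ``the parallelism of $\theta$ immediately implies'' --- which, as you correctly isolate, literally covers only the three $\nabla U$ equations (via $\nabla\theta=0$ and $\nabla g=0$), while the $\nabla V$ equations genuinely require the structure equation $d\omega=\theta\wedge\omega$. Your reduction $V=-JU$, $\nabla_XV=-(\nabla_XJ)U$, followed by the $d\omega$-identity for integrable $J$, is a legitimate self-contained way to supply that missing content, and your contractions do collapse exactly as you predict. Two bookkeeping points, both of which you flagged in advance but which deserve to be made explicit. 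First, with the paper's convention $\omega(X,Y)=g(X,JY)$ the identity reads $2g((\nabla_XJ)Y,Z)=d\omega(X,JY,JZ)-d\omega(X,Y,Z)$; the sign you wrote fits the opposite convention $\omega(X,Y)=g(JX,Y)$. Second --- the substantive point --- carrying your computation through with the paper's stated normalization $|\theta|=1$ yields
\begin{equation*}
\nabla_XV=\tfrac{1}{2}\left(|\theta|^2\,JX-\theta^c(X)\,U+\theta(X)\,V\right),
\end{equation*}
as one can cross-check either via the local conformal--K\"ahler picture ($g=e^{2f}g_K$ with $2df=\theta$ gives $(\nabla_XJ)Y=\tfrac12[\theta(JY)X-\omega(X,Y)U-\theta(Y)JX+g(X,Y)JU]$) or on the standard Vaisman metric of the Hopf surface. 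So the horizontal line comes out as $\nabla_{e_i}V=\tfrac12\,Je_i$, not $Je_i$: the constant $1$ in the lemma as stated reflects Vaisman's normalization (effectively $|\theta|^2=2$, or an equivalent rescaling of the structure equation), so the ``fixing of normalization'' you allude to is not optional --- written up faithfully under the paper's own conventions, your proof establishes the lemma with a factor $\tfrac12$ on the second line. As you correctly observe, the identities $\nabla_UV=\nabla_VV=0$ and the fact that $\nabla_{e_i}V$ is a multiple of $Je_i$ (hence horizontal) are normalization-free, and these are the only features used downstream (the $\nabla^{T\mathcal{F}}$ identities, minimality of the leaves, $\delta_{0,-1}\theta^c=0$), so the discrepancy is harmless for the rest of the paper; your Sasakian alternative via $\nabla_X\xi=-\varphi X$ would likewise match the paper's own description of $V$ as the Reeb field of the local transverse Sasaki structure, with the same constant sensitivity.
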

In particular,   $U$ and $V$ generate a foliation $\caf$ with leafwise dimension $2$ (called the \emph{canonical foliation} \cite{Che-Pic,Par,Tsu}), the leaves of which are minimal submanifolds.

 Moreover, it can be shown, \cite{Vai},
that $g$ is a \emph{bundle-like} metric, which means the foliation is locally a Riemannian submersion. Such spaces are called \emph{Riemannian foliations} \cite{Ton, Mo}.

The metric $g$ induces a splitting of the tangent bundle of the
underlying manifold $M$,
\begin{equation}
TM=Q\oplus T\mathcal{F},  \label{desc_sp_tang}
\end{equation}
with $T\mathcal{F}$ being the leafwise tangent bundle and $Q$ the orthogonal complement. If $\pi_Q$, $\pi_{T\mathcal{F}}$
are the canonical projection operators and
\[
\nabla^Q:=\pi_Q \circ \nabla, \quad\nabla^{T\mathcal{F}}:=\pi_{T\mathcal{F}} \circ \nabla,
\]
then the Levi-Civita connection also splits
\[
\nabla =\nabla ^Q+\nabla ^{T\mathcal{F}}.
\]

The following result collects several useful computational facts:

\begin{lemma}
The relations below hold on a Vaisman manifold:
\begin{equation}
\begin{split} \label{conex_forme_Vaisman}
\nabla _{e_i}^{T \mathcal{F}}U^{\flat }&=0,   \\
\nabla _{e_i}^{T \mathcal{F}}V^{\flat }&=0,   \\
\nabla _U^{T\mathcal{F}}e_i^{\flat }&=\nabla _V^{T\mathcal{F}}e_i^{\flat }=0, \\
\nabla _UU^{\flat }&=\nabla _VU^{\flat }=0,  \\
\nabla _UV^{\flat }&=\nabla _VV^{\flat }=0.
\end{split}
\end{equation}
\end{lemma}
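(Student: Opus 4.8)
The plan is to reduce every identity to \ref{connect_Vaisman} by exploiting that the Levi-Civita connection is metric. Since $\nabla g=0$, the musical isomorphism commutes with $\nabla$: for any vector field $A$ and any $X$ one has $\nabla_X(A^\flat)=(\nabla_X A)^\flat$. Recall also that $U^\flat=\theta$ and $V^\flat=\theta^c$, that $\{U,V\}$ is an orthonormal frame of $T\caf$ with dual coframe $\{\theta,\theta^c\}$, and that consequently the leafwise projection of a one-form $\beta$ is given by
\begin{equation*}
\pi_{T\caf}\beta=\beta(U)\,\theta+\beta(V)\,\theta^c.
\end{equation*}

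First I would dispatch the rows that are pure duals. Dualizing $\nabla_{e_i}U=0$ gives $\nabla_{e_i}U^\flat=0$, so its leafwise part vanishes; similarly, dualizing $\nabla_UU=\nabla_VU=0$ and $\nabla_UV=\nabla_VV=0$ yields the last two rows directly (here no projection is even needed). For the second row, dualizing $\nabla_{e_i}V=J(e_i)=e_{n+i}$ gives $\nabla_{e_i}V^\flat=e_{n+i}^\flat$; since $e_{n+i}$ is orthogonal to both $U$ and $V$, the one-form $e_{n+i}^\flat$ lies in the transverse cotangent bundle $Q^\ast$ and its leafwise projection is zero.

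The only row that is not an immediate dual is the third, $\nabla_U^{T\caf}e_i^\flat=\nabla_V^{T\caf}e_i^\flat=0$, because \ref{connect_Vaisman} gives no closed formula for $\nabla_Ue_i$ or $\nabla_Ve_i$. The point, however, is that I only need the \emph{leafwise components} of $(\nabla_Ue_i)^\flat$ and $(\nabla_Ve_i)^\flat$, and these are controlled entirely by the parallelism of $U$ and $V$ along the leaves. Using metric compatibility and $g(e_i,U)=g(e_i,V)=0$, I would compute
\begin{equation*}
g(\nabla_Ue_i,U)=U\!\left(g(e_i,U)\right)-g(e_i,\nabla_UU)=0,
\end{equation*}
and likewise $g(\nabla_Ue_i,V)=-g(e_i,\nabla_UV)=0$, $g(\nabla_Ve_i,U)=-g(e_i,\nabla_VU)=0$ and $g(\nabla_Ve_i,V)=-g(e_i,\nabla_VV)=0$, each right-hand side vanishing because $\nabla_UU=\nabla_UV=\nabla_VU=\nabla_VV=0$ by \ref{connect_Vaisman}. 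By the projection formula above, both leafwise derivatives vanish.

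The main (and essentially only) obstacle is thus the third row, and it is resolved not by computing the full derivatives $\nabla_Ue_i,\nabla_Ve_i$ — which may still carry transverse and undetermined pieces — but by observing that their pairings with the leafwise frame $\{U,V\}$ are forced to vanish by the Leibniz rule together with the parallelism of $U,V$ in the leaf directions. Everything else follows by dualizing the known connection identities and discarding the transverse summand.
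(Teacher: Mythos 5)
Your proof is correct and follows essentially the same route as the paper's: the first, fourth and fifth rows are obtained by dualizing the identities of \ref{connect_Vaisman}, the second from $\nabla_{e_i}V=J(e_i)$ being transverse, and the third from Leibniz-rule pairings with $U,V$ --- your computation $g(\nabla_U e_i,U)=-g(e_i,\nabla_U U)=0$ is precisely the paper's $(\nabla_U e_i^{\flat})(U)=-e_i^{\flat}(\nabla_U U)=0$ rewritten via the metric. If anything, you are more complete than the paper, which records only one of the four required pairings and leaves the remaining three, together with the leafwise projection formula, implicit.
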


\begin{proof}
The first two, and the last two relations are direct consequences  of the first equation
\eqref{conex_Vaisman}.  For the remaining one, just note  that
\[
\nabla _Ue_i^{\flat }(U) =-e_i^{\flat }(\nabla _UU)=0.
\]
\end{proof}

\subsection{The spectral sequence associated to a Riemannian foliation}

We now introduce  the spectral sequence of a Riemannian foliation. It  encodes many properties concerning the homotopy of the foliation, \cite{Al-Ko, Ton}.

Let $M$ be a closed (\ie compact and without boundary) manifold, and let $\caf$ be a foliation of dimension $p$ and codimension $q$ (hence   $n=p+q$). Let
\begin{equation}
\Omega _k^r=\left\{ \omega \in \Omega ^r\mid \iota_X\omega =0, X=X_1\wedge \cdots\wedge X_{r-k+1},\,X_i\in \Gamma \left(T\mathcal{F}\right) \right\},  \label{def filtrare}
\end{equation}
where $\iota$ denotes the interior product. This way, the set of differential forms $\Omega $ on $M$ becomes a filtered
complex:
\[
\Omega =\Omega _0\supset \Omega _1\supset ...\supset \Omega _q\supset \Omega_{q+1}=0.
\]

We follow \cite[Sec. 2]{Al-Ko}, \cite{Ton} for the definition of the spectral sequence associated to $\caf$.  Set  $E_0^{u,v}:=\Omega_u^{u+v}/\Omega _{u+1}^{u+v}$,
for $0\le u\le q$, $0\le v\le p$, and
construct the ``page'' of order $k+1$  inductively as $E_{k+1}:=H(E_k,d_k)$,
where $d_k$ is canonically induced by $d$. More precisely:
\[
E_{k+1}^{u,v}=\frac{\operatorname{Ker} (d_k:E_k^{u-k,v+k-1}\longrightarrow E_k^{u,v})}
{\operatorname{Im} (d_k:E_k^{u,v}\longrightarrow E_k^{u+k,v-k+1})}.
\]

An explicit description of the above terms is possible:

\[
E_k^{u,v}=\frac{Z_k^{u,v}}{Z_{k-1}^{u+1,v-1}+B_{k-1}^{u,v}},
\]
where the spaces $Z_k^{u,v}$, $B_k^{u,v}$ are:
\begin{equation}\label{B_Z}
\begin{split}
Z_k^{u,v} &:=\Omega _u^{u+v}\cap d^{-1}(\Omega _{u+k}^{u+v+1}), \\
B_k^{u,v} &:=\Omega _u^{u+v}\cap d(\Omega _{u-k}^{u+v-1}).
\end{split}
\end{equation}

\begin{definition}
The family of cohomological complexes $(E_k,d_k)_{k\ge 0}$ is called the spectral sequence of the foliation $\mathcal{F}$.
\end{definition}

\begin{remark}
The spectral terms $\{E_1^{u,0}\}_{0\le u\le q}$ can be identified with the spaces of \emph{basic forms} (with respect to $\caf$):
\[
\Omega_b^u:= \left\{ \alpha \in \Omega^u \mid \iota_{X}\alpha=0,\,
\iota_{X}d\alpha=0\, (\forall)\, X \in \Gamma \left(T\mathcal{F}\right) \right\},
\]
while $\{E_2^{u,0}\}_{0\le u\le q}$ can be identified with the groups $\{H_b^u\}_{0\le u\le q}$ of the
\emph{basic de\thinspace Rham cohomology} of the foliation.
\end{remark}

\hfill

The following result can be seen as an extension of the topological invariance
of the groups of basic cohomology stated in \cite{Kac-Nic1}:

\begin{theorem}
(\cite{Al-Mas})  On a compact Riemannian foliation the dimension of the spectral terms
$E_k^{u,v}$ is invariant with respect to foliated homeomophisms (\emph{i.e.}
homeomorphisms which send leaves to leaves) for $k\ge 2$, $0\le
u\le q$, $0\le v\le p$.
\end{theorem}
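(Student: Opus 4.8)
The plan is to reduce the statement to the already-known topological invariance of basic cohomology---the $v=0$ row, \cite{Kac-Nic1}---by reinterpreting each page $E_k$ with $k\ge 2$ as the cohomology of a purely transverse, topologically defined object. The essential difficulty is that the terms $E_k^{u,v}$ are manufactured from the \emph{smooth} de\thinspace Rham complex $(\Omega,d)$ and its filtration \eqref{def filtrare}, both of which depend on the differentiable structure, whereas a foliated homeomorphism $f:(M,\caf)\to(M',\caf')$ is merely continuous. So the whole point is to show that, from the second page onward, these groups no longer remember the smooth structure and can be computed from data preserved by $f$.

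First I would recall that the filtration by transverse degree is exactly the one producing a Leray-type spectral sequence: the zeroth page $E_0^{u,v}$ consists of forms with $u$ transverse and $v$ leafwise legs, $d_0$ is the leafwise exterior derivative, so that $E_1^{u,v}$ is the space of transverse $u$-forms valued in the leafwise de\thinspace Rham cohomology $\mathcal{H}^v(\caf)$, with $d_1$ induced by the transverse part of $d$. Passing to $d_1$-cohomology yields the $E_2$ description
\[
E_2^{u,v}\cong H^u\big(M/\caf;\, \mathcal{H}^v(\caf)\big),
\]
the (transverse, basic) cohomology of the leaf space with coefficients in the sheaf $\mathcal{H}^v(\caf)$ whose stalks are the leafwise de\thinspace Rham cohomology groups of degree $v$; the row $v=0$ is the basic cohomology $\{H_b^u\}$. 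For a Riemannian foliation on a compact manifold this sheaf-theoretic picture is rigorous: leaf closures form a stratification, the holonomy pseudogroup acts nicely on a complete transversal, and the El Kacimi--Nicolau machinery identifies the transverse cohomology with the cohomology of a model built from this pseudogroup.

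Next I would check that every ingredient of this $E_2$ description is a foliated-homeomorphism invariant. The coefficient sheaves $\mathcal{H}^v(\caf)$ are topological: leafwise de\thinspace Rham cohomology agrees with the singular cohomology of the leaf, hence is invariant under the leaf homeomorphisms induced by $f$, and $f$ intertwines the holonomy pseudogroups together with the leafwise-cohomology local systems they carry. The transverse cohomology with such coefficients is then, by the invariance result of \cite{Kac-Nic1} enhanced to nontrivial coefficients, again invariant under $f$. Since the identification is natural, $f$ induces an isomorphism on every $E_2^{u,v}$. Finally, for $k\ge 2$ each page together with its differential is determined functorially by the preceding one through $E_{k+1}=H(E_k,d_k)$; naturality of $d_k$ with respect to foliated maps propagates the isomorphism to all higher pages, giving $\dim E_k^{u,v}(M,\caf)=\dim E_k^{u,v}(M',\caf')$ for every $k\ge 2$.

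The main obstacle is precisely the transition $E_1\rightsquigarrow E_2$: the first page, being literally smooth forms modulo smooth forms, is \emph{not} a topological invariant, and one must prove that taking $d_1$-cohomology erases the smooth data uniformly enough that a homeomorphism respects the outcome. Concretely, this demands a de\thinspace Rham-type comparison relating the leafwise-smooth complex to a topological (singular or sheaf) resolution, valid transversally and compatibly with the holonomy pseudogroup action; it is here that the Riemannian hypothesis is indispensable, since it is what guarantees the tameness of the leaf closures and of the pseudogroup required for such a comparison. Handling the nontrivial coefficient systems $\mathcal{H}^v$ for $v>0$, rather than the constant coefficients of basic cohomology alone, is the additional technical step beyond \cite{Kac-Nic1}.
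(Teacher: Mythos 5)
The first thing to note is that the paper does not prove this statement at all: it is quoted verbatim from \'Alvarez L\'opez--Masa \cite{Al-Mas}, whose actual proof runs through the theory of morphisms between complete Riemannian pseudogroups (Molino's structure theory, closures of holonomy pseudogroups, and the Hodge-theoretic description of $E_2$ from \cite{Al-Ko}), not through a Leray-type sheaf computation. But your proposal fails on its own terms, because its central identification
\[
E_2^{u,v}\cong H^u\bigl(M/\caf;\,\mathcal{H}^v(\caf)\bigr),
\]
with a coefficient ``sheaf'' whose stalks are the (singular) cohomology groups of the individual leaves, is false for Riemannian foliations. Take the Kronecker foliation of $T^2$ by lines of irrational slope: it is Riemannian with $p=q=1$, every leaf is a copy of $\R$, so $H^1$ of each leaf vanishes and your formula forces $E_2^{u,1}=0$ for all $u$. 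But since $q=1$ the differential $d_2$ vanishes identically, hence $E_2=E_\infty$, and convergence to de\thinspace Rham cohomology gives $E_2^{1,1}\cong H^2(T^2)=\R$. The point your argument misses is that $E_1^{u,v}$ is the leafwise cohomology computed with \emph{smooth transverse dependence}, whose size is governed by the transverse dynamics (small denominators, Diophantine conditions on the slope), not by the topology of the individual leaves; your own claim that ``leafwise de\thinspace Rham cohomology agrees with the singular cohomology of the leaf'' is exactly the conflation that breaks. This is why $E_1$ fails to be a topological invariant, and it equally defeats the naive sheaf-theoretic description of $E_2$; the appeal to an ``enhancement'' of \cite{Kac-Nic1} to such coefficient systems invokes a result that does not exist in the form you need.

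There is a second structural gap in the propagation step. A foliated homeomorphism does not pull back smooth forms, so it induces no morphism of the filtered complexes, and therefore no morphism of spectral sequences; producing abstract isomorphisms of the $E_2$ terms (even if your identification were correct) would not by itself show compatibility with $d_2$, and hence would say nothing about $E_3, E_4,\dots$. Constructing, from a mere homeomorphism, a map that intertwines the pages and differentials from $k=2$ onward is precisely the hard analytic--geometric content of \cite{Al-Mas}; it cannot be obtained from ``naturality'' because there is no underlying chain map to be natural about. So the approach as written is not repairable by adding care to the sheaf comparison: both the $E_2$ description and the passage to higher pages need to be replaced by genuinely different arguments.
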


\hfill

Let now $(M,J,g)$ be a closed Vaisman manifolds of
dimension $2n+2$ and let $\caf$ be the canonical foliation. We denote by $\{b_i\}_{0\le i\le 2n+2}$
the Betti numbers of $M$ and by $\{e_i\}_{0\le i\le 2n}$ the basic Betti numbers. Then we have:

\begin{theorem}
\cite[Theorem 4.2]{Vai}\label{basic} On a compact Vaisman manifold the numbers $\{e_i\}$ are uniquely
determined by $\{b_i\}$ and, conversely, starting with Betti numbers $\{b_i\}
$ we can compute the basic Betti numbers $\{e_i\}$.
\end{theorem}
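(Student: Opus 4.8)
The plan is to compute the de Rham cohomology of $M$ through harmonic forms, treating the two leafwise directions $U$ and $V$ one at a time. First I would record the structural identities that make this possible. Since $\theta=U^{\flat}$ is parallel it is closed and harmonic, whereas the anti-Lee form $\theta^{c}=V^{\flat}$ is not closed; writing the fundamental form as $\omega=\omega_{0}+\theta\wedge\theta^{c}$ with $\omega_{0}$ horizontal, the integrability condition $d\omega=\theta\wedge\omega$ forces $\omega_{0}$ to be a \emph{basic, closed transverse K\"ahler form} and $d\theta^{c}=-\omega_{0}$ (up to the sign convention). Both $U$ and $V$ are Killing, so their flows act by isometries and in particular preserve harmonic forms.

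Next I would peel off the Lee direction. Since $\theta$ is parallel, both exterior multiplication $\theta\wedge(\cdot)$ and interior multiplication $\iota_{U}$ commute with the Hodge Laplacian (a standard property of multiplication operators by a parallel form), so they send harmonic forms to harmonic forms. Writing $\alpha=(\alpha-\theta\wedge\iota_{U}\alpha)+\theta\wedge\iota_{U}\alpha$ for a harmonic form $\alpha$ then splits the harmonic space as $\mathcal{H}^{p}(M)=\mathcal{H}^{p}_{U}\oplus\theta\wedge\mathcal{H}^{p-1}_{U}$, where $\mathcal{H}^{\bullet}_{U}$ is the subspace of $U$-horizontal harmonic forms. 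These compute the cohomology $H^{\bullet}(S)$ of the transverse Sasakian structure $S$ obtained by quotienting the Lee flow, giving $b_{p}=\dim H^{p}(S)+\dim H^{p-1}(S)$.

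For $H^{\bullet}(S)$ I would use the Reeb structure carried by $V$. The contact-type identity $d\theta^{c}=-\omega_{0}$ produces a Gysin-type long exact sequence relating $H^{\bullet}(S)$ to the basic cohomology $H^{\bullet}_{b}$ of $\caf$, with connecting homomorphism the Lefschetz operator $L_{0}=[\omega_{0}]\wedge(\cdot)\colon H^{j}_{b}\to H^{j+2}_{b}$. Invoking the transverse Hodge theory of Riemannian foliations with a transverse K\"ahler structure, the basic cohomology satisfies the Hard Lefschetz property, so $L_{0}$ is injective for $j\leq n-1$ and surjective for $j\geq n-1$. Computing the kernel and cokernel in the Gysin sequence then gives $\dim H^{p}(S)=e_{p}-e_{p-2}$ for $p\leq n$ (with $e_{j}=0$ outside $0\leq j\leq 2n$), a formula extended to the remaining degrees by transverse Poincar\'e duality $e_{j}=e_{2n-j}$.

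Substituting into the splitting from the second step yields, for $p\leq n$,
\[
b_{p}=(e_{p}+e_{p-1})-(e_{p-2}+e_{p-3}).
\]
Setting $f_{p}:=e_{p}+e_{p-1}$ turns this into $b_{p}=f_{p}-f_{p-2}$, whence $f_{p}=b_{p}+b_{p-2}+b_{p-4}+\cdots$ is determined by the Betti numbers, and then $e_{p}=f_{p}-f_{p-1}+f_{p-2}-\cdots$ recovers each basic Betti number; reading the same relations in the opposite order computes the $b_{p}$ from the $e_{p}$. This gives the asserted mutual determination. The main obstacle is the third step: making the quotient $S$ and its Gysin sequence rigorous when the Lee and anti-Lee flows are not quasi-regular, so that $S$ and the leaf space need not be manifolds. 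This is handled by replacing honest quotients with the basic/transverse Hodge theory of the Riemannian foliation $\caf$ and with the basic Hard Lefschetz theorem; granting those, the algebra of the second and fourth steps is routine.
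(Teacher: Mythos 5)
The paper itself gives no proof of this statement: it is imported verbatim from Vaisman (\cite[Theorem 4.2]{Vai}), so the only meaningful comparison is with Vaisman's original argument --- and your reconstruction is, in substance, that argument. The two-stage structure (split the harmonic forms of $M$ along the parallel Lee form, then compute the transversally Sasakian piece using the transverse K\"ahler geometry) is exactly the classical route, and the relation you arrive at, $b_p=(e_p+e_{p-1})-(e_{p-2}+e_{p-3})$ for $p\le n$, is precisely Vaisman's; its inversion reproduces the coefficient pattern $\left(\left[\frac{u}{2}\right]-i+1\right)$ that this paper uses in the Corollary following \ref{Thm 1}, which I checked is consistent with your formula. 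The one genuine difference is in how the second stage is handled: Vaisman argues via Tachibana--Fujitani-type transverse K\"ahler identities, showing that the $U$-horizontal harmonic forms of degree $p\le n$ are basic for $\caf$ and transversally primitive, which yields $\dim\mathcal{H}^p_U=e_p-e_{p-2}$ directly; you instead package the same computation as a Gysin sequence with connecting map $L=[d\theta^c]\wedge\cdot$ on $H_b^{\bullet}$ plus the basic hard Lefschetz theorem of El Kacimi (cf.\ also \cite{bg} for the Sasakian model). The two are equivalent; the Gysin formulation is cleaner, but note that basic hard Lefschetz and basic Poincar\'e duality require tautness, which holds here because the canonical foliation is minimal ($e_{2n}=1$), a point worth stating explicitly.

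Two details you gloss over, neither fatal. First, as you acknowledge, the quotient $S$ does not exist in general; the rigorous substitute for your second step is the basic cohomology $H_b^{\bullet}(\mathcal{F}_U)$ of the Lee flow together with the Gysin sequence for isometric Riemannian flows: since $d\theta=0$ the Euler class of $\mathcal{F}_U$ vanishes and the sequence splits, giving $b_p=\dim H_b^p(\mathcal{F}_U)+\dim H_b^{p-1}(\mathcal{F}_U)$ without having to identify $U$-horizontal $\Delta$-harmonic forms with basic-harmonic ones (your commuting-with-the-Laplacian argument is correct for the splitting of $\mathcal{H}^p(M)$, but the identification of $\mathcal{H}^p_U$ with $H_b^p(\mathcal{F}_U)$ is an extra step needing El Kacimi's transverse Hodge theory for the taut flow). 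Second, ``reading the same relations in the opposite order'' does not quite recover all the $b_p$: your relations cover $p\le n$ only, and the converse direction also needs the middle degree, where both the kernel and cokernel terms of the Gysin sequence survive and give $b_{n+1}=2(e_n-e_{n-2})$, before Poincar\'e duality on $M$ finishes the list. With these two points made explicit, the proposal is a correct proof along the standard lines.
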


\hfill

As a consequence, if we describe the dimension of all spectral terms of
order 2 using the numbers $\{e_i\}$, then it would be possible to express them
using only  the de\thinspace Rham complex of $M$. We have
$$\dim E_2^{u,0}=\dim H_b^u:=e_u.$$
On the other hand, using the  Poincar\'e duality for the basic de\thinspace Rham complex of a minimal foliation
(see \emph{e.g.} \cite[Chapter 7]{Ton}), the duality stated in \cite{Mas} gives
$$\dim E_2^{u,0}=\dim E_2^{u,2}.$$
Then, the difficulty consists in  studying the terms $E_2^{u,1}$, for $0\le u\le 2n$.
To overcome it,  we use a Hodge-type theory  for the terms of the
spectral sequence, \cite{Al-Ko}, that we further describe.

 \subsubsection{\bf A Hodge-type theory} Let $\caf$ be a closed Riemannian foliation. The splitting \eqref{desc_sp_tang} induces a corresponding bigrading of $\Omega$ such that

\[
\Omega _k=\bigoplus\limits_{u\geq k}\Omega ^{u,\cdot }.
\]
Therefore \eqref{def filtrare} can be regarded as the set of
differential forms with transverse degree at least $k$. Then:
\begin{equation}
\label{splitting}
\Omega ^{u,v}=\cac^\infty \left(\bigwedge^uQ^{*}\oplus \bigwedge^v T\mathcal{F}^{*}\right),\,u,v\in \mathbb{Z}.
\end{equation}

Let $\pi _{u,v}:\Omega \rightarrow \Omega^{u,v}$ be the canonical projections
determined by the bigrading of $\Omega $. Define the topological
vector spaces:
\[
z_k^{u,v}=\pi _{u,v}(Z_k^{u,v}),\quad b_k^{u,v}=\pi _{u,v}(B_k^{u,v}),\quad
e_k^{u,v}=z_k^{u,v}/b_{k-1}^{u,v}.
\]

One can see that
\[
Z_k^{u,v}\cap \operatorname{Ker} \pi _{u,v}=Z_{k-1}^{u+1,v-1},
\]
and this induces the continuous linear isomorphism
\begin{equation}
E_k^{u,v}\simeq  e_k^{u,v}.  \label{e_E}
\end{equation}

Let now $d_k$ be the operator introduced above. In \cite[Section 5.1]{Al-Ko} a sequence
of Laplace type operators is inductively constructed: $\Delta _0,\Delta_1,\ldots$,
as well as a sequence of corresponding kernel spaces
$\mathcal{H}_1\supseteq \mathcal{H}_2\supseteq \cdots$, such
that:
\[
\Omega =\mathcal{H}_1\oplus \overline{\operatorname{Im}\,d_0}\oplus \overline{\operatorname{Ker}
\delta _0},
\]
\[
\mathcal{H}_1=\mathcal{H}_2\oplus \operatorname{Im}\,d_1\oplus \operatorname{Ker} \delta _1,
\]
\[
\vdots
\]
where the overline denotes closure with respect to $\cac ^\infty$ topology. These decompositions are the analogues of the Hodge
theory in our setting. If $\mathcal{H}_k^{u,v}:=\pi _{u,v}(\mathcal{H}_k)$
then these vector spaces are related by the following isomorphism
\cite[Section 5.1]{Al-Ko} (for $k=2$ see also \cite[ Theorem 2.2(iv)]
{Al-Ko}):

\[
e_k^{u,v}\simeq \mathcal{H}_k^{u,v}
\]
so together with \eqref{e_E} we obtain
\begin{equation}
\mathcal{H}_k^{u,v}\simeq E_k^{u,v},\quad \text{for}\, k\ge 2.  \label{E_H}
\end{equation}

The  de\thinspace Rham differential  and codifferential decompose with respect to the bigrading \eqref{splitting} as (\cite{Ton}):
\begin{equation}
d=d_{0,1}+d_{1,0}+d_{2,-1},\ \delta =\delta _{0,-1}+\delta _{1,0}+\delta_{-2,1},  \label{rost}
\end{equation}
with
\[
d_{i,j}:\Omega^{u,v} \rightarrow \Omega^{u+i,v+j}.
\]
Note that $\delta_{i,j}$ is the adjoint operator of $d_{i,j}$. Also, $d_0 \equiv d_{0,1}$.

\begin{remark}\label{mean curv e 0}
We can define the basic de\thinspace Rham operators $d_b$, $\delta_b$ restricting
$d_{1,0}$ and $\delta_{-1,0}$ to $\Omega_b$. If the foliation has vanishing mean curvature (for instance in the case of canonical foliations on Vaisman manifolds), then these operators coincide with the usual de\thinspace Rham operators on some local transverse submanifold. Consequently, the \emph{basic Laplace operator} $\Delta_b:=d_b \delta_b+\delta_b d_b$ coincides  with the corresponding transverse operator (see \eg \cite{Ton}). Similarly, using the 1-st order operators $d_{0,1}$, $\delta_{0,-1}$, we construct $\Delta_0:=d_{0,1}\delta_{0,-1}+\delta_{0,-1}d_{0,1}$. The \emph{leafwise Laplace operator} $\Delta_{\mathcal{F}}$ can be constructed using the restrictions $d_{\mathcal{F}}$, $\delta_{\mathcal{F}}$ of the first order operators $d_{0,1}$, $\delta _{0,-1}$ to $\Omega^{0,\cdot}$. Finally, we notice that $d_{0,1}$, $\delta _{0,-1}$ vanish on basic forms.
\end{remark}

\hfill

We investigate the kernel space $\mathcal{H}_2$ using the \emph{adiabatic limit} of the foliation.

The metric tensor can be written according to \eqref{desc_sp_tang}:
\[
g=g_Q \oplus g_{T\mathcal{F}}.
\]
Introducing a parameter $h>0$, we define the
family of metrics:
\begin{equation}
g_h=h^{-2}g_Q \oplus g_{T\mathcal{F}}.  \nonumber
\end{equation}
The pair represented by the manifold $M$ and the ``limit'' of the Riemannian manifolds $(M,g_h)$
when $h\downarrow 0$ is called the ``adiabatic limit'' of the
foliation $(M,\mathcal{F})$. This concept was introduced for the
first time by Witten, being a necessary tool for the study of the ``eta''
invariant of the Dirac operator; it was also used and extended by \cite{Bi-F} and \cite{Maz-Me}.

Let $\Theta _h:\left( \Omega ,g_h\right)
\rightarrow \left( \Omega ,g\right)$ be (\cite{Maz-Me}):
\begin{equation*}
\Theta _h\omega =h^u\omega ,\ \forall \omega \in \Omega ^{u,v}, \quad u,v\in\mathbb{N}.
\end{equation*}
We define the differential and
codifferential obtained by the ``rescaling'' procedure:
\begin{equation}
\begin{split} \nonumber
d_h &=\Theta _hd\Theta _h^{-1}, \\
\delta_h &=\Theta _h\delta _{g_h}\Theta _h^{-1}.
\end{split}
\end{equation}
Then (\ref{rost}) implies:
\begin{equation}
\begin{split} \label{d_delta_h}
d_h &=d_{0,1}+hd_{1,0}+h^2d_{2,-1},   \\
\delta _h &=\delta _{0,-1}+h\delta _{-1,0}+h^2\delta _{-2,1}.
\end{split}
\end{equation}
The corresponding Laplace and Dirac operators are:
\begin{equation*}
\begin{split}
\Delta _h&:=\Theta _h\Delta _{g_h}\Theta _h^{-1}=d_h\delta _h+\delta _hd_h,\\
D_h&:=d_h+\delta _h.
\end{split}
\end{equation*}
It can be shown
that $D_h$ is (formally) self-adjoint and $D_h^2=\Delta _h$.

We then have:

\begin{theorem}\label{Thm Al-Ko}
\cite[Section 1]{Al-Ko} Let $\{\al_i\}$ be a sequence of
differential forms  in $\Omega ^r$, with $\left\| \alpha_i\right\| =1$,
and let $h_i$ be a sequence of real numbers such that $h_i\downarrow 0$. If
\[
\left\langle \Delta _{h_i}\alpha _i,\alpha _i\right\rangle \in o\left(
h_i^2\right) ,
\]
then there exists a subsequence of $\alpha _i$ which converges in $\mathcal{H}_2^r$.
\end{theorem}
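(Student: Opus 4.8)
The plan is to prove the sharper statement that the $L^{2}$-distance from $\alpha_i$ to the finite dimensional space $\mathcal{H}_2^{r}$ tends to $0$, and then to finish by compactness of the unit sphere in $\mathcal{H}_2^{r}$. Since $d_h\alpha$ and $\delta_h\alpha$ differ in total degree they are orthogonal, so $\langle\Delta_h\alpha,\alpha\rangle=\norm{d_h\alpha}^{2}+\norm{\delta_h\alpha}^{2}$; substituting \eqref{d_delta_h} and decomposing by the bigrading \eqref{splitting}, the right hand side becomes a polynomial in $h$ whose $h^{0}$-coefficient is $\langle\Delta_0\alpha,\alpha\rangle=\norm{d_{0,1}\alpha}^{2}+\norm{\delta_{0,-1}\alpha}^{2}$ and whose $h^{2}$-coefficient carries the transverse (basic) energy. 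Using the two decompositions of Section 2, let $P_1,P_2$ be the orthogonal projections onto $\mathcal{H}_1$ and $\mathcal{H}_2$ and split each $\alpha_i$ orthogonally as $\alpha_i=P_2\alpha_i+(P_1-P_2)\alpha_i+(\Id-P_1)\alpha_i$, with pieces in $\mathcal{H}_2$, in $\mathcal{H}_1\ominus\mathcal{H}_2$, and in $\mathcal{H}_1^{\perp}$ respectively. Everything reduces to showing that the last two pieces vanish in the limit.

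The middle piece is the tractable one. On $\mathcal{H}_1$ the induced operator $d_1$ is the transverse de\thinspace Rham differential and, by \eqref{E_H}, $\mathcal{H}_2=\ker(\Delta_1|_{\mathcal{H}_1})$ is the kernel of the corresponding basic Laplacian. By the transverse Hodge theory of a Riemannian foliation on a closed manifold this basic Laplacian has discrete spectrum, hence a gap $\lambda_1>0$ above $0$. One then shows that the $h^{2}$-part of $\langle\Delta_{h_i}\alpha_i,\alpha_i\rangle$ dominates $h_i^{2}\lambda_1\norm{(P_1-P_2)\alpha_i}^{2}$ up to cross terms of lower order; since the total energy is $o(h_i^{2})$, this forces $\norm{(P_1-P_2)\alpha_i}\to0$.

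The component $\gamma_i:=(\Id-P_1)\alpha_i\in\mathcal{H}_1^{\perp}$ is the crux, and the main obstacle. It is tempting to control it through the $h^{0}$-term by a spectral gap of the leafwise Laplacian $\Delta_0$, but \emph{no such gap exists}: for the foliation of a $2$-torus by lines of irrational slope the leafwise spectrum accumulates at $0$, so $\langle\Delta_0\gamma,\gamma\rangle$ can be made arbitrarily small while $\gamma$ remains a unit vector orthogonal to $\mathcal{H}_1$. The point is that for such $\gamma$ a small leafwise energy is always paid for by a large transverse frequency, and it is precisely the combination present in $\Delta_h$ that detects this: one must establish a coercivity estimate of the form $\langle\Delta_{h_i}\gamma_i,\gamma_i\rangle\ge c\,h_i^{2}\norm{\gamma_i}^{2}$ with $c>0$ independent of $i$, in which the leafwise ($h^{0}$) and transverse ($h^{2}$) contributions must be used \emph{together}, since neither is individually coercive on $\mathcal{H}_1^{\perp}$. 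Proving this coercivity while keeping the bidegree interference terms — the contributions living in a common target bidegree but coming from $d_{0,1}$, $d_{1,0}$ and $d_{2,-1}$ with weights $1,h,h^{2}$ — provably of lower order is the technical heart of the argument, and is exactly what the adiabatic estimates of \cite{Al-Ko} supply. Granting it, the hypothesis $\langle\Delta_{h_i}\alpha_i,\alpha_i\rangle\in o(h_i^{2})$ yields $\norm{\gamma_i}\to0$.

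With both extra pieces tending to $0$ we get $\norm{\alpha_i-P_2\alpha_i}\to0$, hence $\norm{P_2\alpha_i}\to1$. As $\mathcal{H}_2^{r}\simeq E_2^{r}$ is finite dimensional (the page-two terms of a Riemannian foliation on a closed manifold are finite dimensional), the bounded sequence $\{P_2\alpha_i\}$ has a subsequence converging to some $\alpha\in\mathcal{H}_2^{r}$ with $\norm{\alpha}=1$; along that subsequence $\alpha_i\to\alpha$ in $\mathcal{H}_2^{r}$, which is the assertion. Thus the two places where I expect to spend real effort are the combined leafwise/transverse coercivity on $\mathcal{H}_1^{\perp}$ (circumventing the missing gap of $\Delta_0$) and the bookkeeping that renders the bidegree cross terms negligible.
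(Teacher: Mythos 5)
First, a point of comparison: the paper does not prove this statement at all --- it is quoted as a black box from \cite[Section 1]{Al-Ko}, where it is the main adiabatic-limit theorem and its proof occupies most of that (long) article. So your attempt must be judged on its own, and as it stands it is a roadmap rather than a proof: its final step is fine (finite-dimensionality of $\mathcal{H}_2^r\simeq E_2^r$ plus $\|\alpha_i-P_2\alpha_i\|\to 0$ does give a convergent subsequence with unit limit in $\mathcal{H}_2^r$), but both steps that carry the analytic weight are gapped. For the middle piece, your claim that $\Delta_1$ on $\mathcal{H}_1\ominus\mathcal{H}_2$ has a spectral gap ``by transverse Hodge theory'' is unjustified: the transverse Hodge theory of a Riemannian foliation (El Kacimi) gives discreteness of the basic Laplacian acting on \emph{basic} forms, i.e.\ on the leafwise-degree-zero part of $\mathcal{H}_1$ only. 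In positive leafwise degree $\mathcal{H}_1$ is an infinite-dimensional space of leafwise-harmonic forms which, when leaves are non-compact, is not the space of sections of any nice bundle; even making sense of $P_1$ and $d_1$ there requires the leafwise-heat-flow smoothing theorem of \'Alvarez L\'opez--Kordyukov, and the gap you want is equivalent to closedness of $\operatorname{Im} d_1$ together with $\dim\mathcal{H}_2<\infty$, which is precisely \'Alvarez L\'opez's finiteness theorem for $E_2$ of Riemannian foliations --- one of the main results you would need to prove, not an off-the-shelf fact.

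The deeper gap is the piece in $\mathcal{H}_1^{\perp}$. You correctly diagnose that $\Delta_0$ has no spectral gap and that a combined coercivity $\langle\Delta_{h_i}\gamma_i,\gamma_i\rangle\ge c\,h_i^2\|\gamma_i\|^2$ is what is needed, but you then ``grant'' it by appealing to ``the adiabatic estimates of \cite{Al-Ko}'' --- which is circular, since the statement being proved \emph{is} the adiabatic estimate of \cite{Al-Ko}. Any genuine proof of that coercivity must use the Riemannian hypothesis in an essential way (via Molino's structure theory and anisotropic Sobolev estimates); the statement is simply false for general foliations, where $E_2$ can be infinite-dimensional, and nothing in your sketch visibly uses bundle-likeness. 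Moreover, even granting coercivity on each summand, $\Delta_h$ does not commute with $P_1,P_2$, so $\langle\Delta_h\alpha,\alpha\rangle$ does not split over your three-term decomposition: writing $\beta_i$ for the $\mathcal{H}_1$-component (on which $d_{0,1}$ and $\delta_{0,-1}$ vanish), the cross term $2h_i\operatorname{Re}\langle d_{1,0}\beta_i,\,d_{0,1}\gamma_i\rangle$ is a priori only $O(h_i)$, i.e.\ \emph{larger} than the $h_i^2$ scale at which you are working, and the mixed anticommutator $d_{1,0}\delta_{0,-1}+\delta_{0,-1}d_{1,0}$ does not vanish in general, so it cannot be integrated away by parts; the same interference occurs between the different bidegree components of $\alpha_i$ under $d_{0,1},d_{1,0},d_{2,-1}$. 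You flag this bookkeeping as the ``technical heart'' and then defer it --- but that deferral, together with the outsourced coercivity and the unproved gap for $\Delta_1$, means the proposal assumes the substance of the theorem rather than proving it.
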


\begin{theorem}\label{Thm noua}
\cite[Section 5.1]{Al-Ko} The spaces $\mathcal{H}_2^{u,v}$ are uniquely
determined by $z_2^{u,v}$, $b_1^{u,v}$ and $b_0^{u,v}$ as follows:
\[
z_2^{u,v}+\overline{ b_0^{u,v}}=\mathcal{H}_2^{u,v}\oplus (b_1^{u,v}+\overline{b_0^{u,v}}),
\]
where the closure of $b_0^{u,v}$ is considered in the $\cac ^ \infty$ topology.
\end{theorem}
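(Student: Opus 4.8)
My plan is to translate the filtration-defined spaces $z_2^{u,v}$, $b_0^{u,v}$, $b_1^{u,v}$ into the language of the leafwise operators $d_{0,1},d_{1,0}$ of \eqref{rost}, to project everything onto the leafwise-harmonic forms $\mathcal{H}_1$ by the first Hodge-type decomposition, and then to read off the result from the second Hodge-type decomposition attached to $\Delta_1$. \emph{Step 1 (explicit cocycles and coboundaries).} Writing a general element of $\Omega_u^{u+v}$ as $\tilde\omega=\omega+\omega'+\cdots$ with $\omega\in\Omega^{u,v}$, $\omega'\in\Omega^{u+1,v-1}$, and collecting the transverse degrees $u$ and $u+1$ in $d\tilde\omega$ via \eqref{rost}, the defining condition $d\tilde\omega\in\Omega^{u+v+1}_{u+2}$ from \eqref{B_Z} becomes $d_{0,1}\omega=0$ together with $d_{0,1}\omega'=-d_{1,0}\omega$. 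Applying $\pi_{u,v}$ gives
\[
z_2^{u,v}=\{\omega\in\Omega^{u,v}\mid d_{0,1}\omega=0,\ d_{1,0}\omega\in\im d_{0,1}\},
\]
and the same bookkeeping yields $b_0^{u,v}=\im(d_{0,1}\colon\Omega^{u,v-1}\to\Omega^{u,v})$ and $b_1^{u,v}=b_0^{u,v}+d_{1,0}(\Ker d_{0,1}\cap\Omega^{u-1,v})$. I would record here the relation $d_{0,1}d_{1,0}+d_{1,0}d_{0,1}=0$, read off from the bidegree $(1,1)$ part of $d^2=0$; it gives $b_1^{u,v}\subseteq z_2^{u,v}$ and shows that $d_{1,0}$ preserves both $\Ker d_{0,1}$ and $\overline{\im d_{0,1}}$.

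\emph{Step 2 (projection to $\mathcal{H}_1$).} The first Hodge-type decomposition supplies the orthogonal splitting $\Omega^{u,v}=\mathcal{H}_1^{u,v}\oplus\overline{\im d_{0,1}}\oplus\overline{\im\delta_{0,-1}}$ and, restricted to closed forms, $\Ker(d_{0,1}|_{\Omega^{u,v}})=\mathcal{H}_1^{u,v}\oplus\overline{\im d_{0,1}}$; let $P$ denote the orthogonal projection onto $\mathcal{H}_1^{u,v}$. Since $z_2^{u,v}$, $b_1^{u,v}$ and $\overline{b_0^{u,v}}=\overline{\im d_{0,1}}$ all lie inside $\Ker d_{0,1}$, a direct verification gives
\[
z_2^{u,v}+\overline{b_0^{u,v}}=P(z_2^{u,v})\oplus\overline{\im d_{0,1}},\qquad b_1^{u,v}+\overline{b_0^{u,v}}=\im d_1\oplus\overline{\im d_{0,1}},
\]
where $d_1=P\circ d_{1,0}|_{\mathcal{H}_1}$ is the differential induced on $\mathcal{H}_1\simeq E_1$ (the second identity uses that $d_{1,0}$ maps $\overline{\im d_{0,1}}$ into itself). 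Because $\overline{\im d_{0,1}}$ is a common direct summand, the asserted equality is equivalent to the purely harmonic statement $P(z_2^{u,v})=\mathcal{H}_2^{u,v}\oplus\im d_1$ inside $\mathcal{H}_1^{u,v}$.

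\emph{Step 3 (second Hodge decomposition and the decisive inclusion).} By the second Hodge-type decomposition $\mathcal{H}_1=\mathcal{H}_2\oplus\im d_1\oplus\im\delta_1$ one has $\mathcal{H}_2^{u,v}\oplus\im d_1=\Ker(d_1\colon\mathcal{H}_1^{u,v}\to\mathcal{H}_1^{u+1,v})$, so it remains to prove $P(z_2^{u,v})=\Ker d_1$. The inclusion $\subseteq$ is immediate: for $\omega\in z_2^{u,v}$ write $\omega=\omega_H+c$ with $c\in\overline{\im d_{0,1}}$; then $d_{1,0}\omega\in\im d_{0,1}$ and $d_{1,0}c\in\overline{\im d_{0,1}}$ force $d_1\omega_H=P(d_{1,0}\omega_H)=0$. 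The reverse inclusion is the crux: given $\omega_H\in\mathcal{H}_1^{u,v}$ with $d_1\omega_H=0$, the commutation relation yields $d_{0,1}(d_{1,0}\omega_H)=0$, hence $d_{1,0}\omega_H\in\overline{\im d_{0,1}}$; but to realize $\omega_H$ as the $P$-image of an honest \emph{smooth} element of $z_2^{u,v}$ one must first correct $\omega_H$ by an element of $\overline{\im d_{0,1}}=\overline{b_0^{u,v}}$ so that $d_{1,0}$ of the corrected form lands in $\im d_{0,1}$ itself, not merely in its closure.

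\emph{Main obstacle.} This last passage is exactly where the non-closedness of $\im d_{0,1}$ (the leafwise-exact forms), i.e. the possible non-Hausdorffness of the leafwise cohomology, blocks a naive Hodge argument, and it is precisely what forces the closure term $\overline{b_0^{u,v}}$ into the statement. I expect essentially all of the genuine analysis to be concentrated here, and I would supply it through the adiabatic-limit machinery of \'Alvarez-L\'opez and Kordyukov: the spectral estimate of Theorem \ref{Thm Al-Ko}, together with the $\cac^\infty$-topology analysis of \cite[Section 5.1]{Al-Ko}, controls the small eigenvalues of $\Delta_h$ as $h\downarrow 0$, guarantees that $\mathcal{H}_2^{u,v}$ is finite-dimensional with smooth representatives, and allows one to approximate a harmonic $d_1$-cocycle by smooth filtered cocycles modulo $\overline{b_0^{u,v}}$ — which is exactly the content of the reverse inclusion and hence of the theorem.
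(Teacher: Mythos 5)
The paper contains no proof of \ref{Thm noua} to compare against: the statement is imported verbatim from \cite[Section 5.1]{Al-Ko} (together with the companion isomorphism $e_2^{u,v}\simeq\mathcal{H}_2^{u,v}$, cf.\ Theorem 2.2(iv) there), and the text around it simply uses it as a black box. Measured against the source, your Steps 1 and 2 are correct bookkeeping: the identifications $z_2^{u,v}=\{\omega\in\Omega^{u,v}\mid d_{0,1}\omega=0,\ d_{1,0}\omega\in\im d_{0,1}\}$, $b_0^{u,v}=\im d_{0,1}$, $b_1^{u,v}=\im d_{0,1}+d_{1,0}(\Ker d_{0,1}\cap\Omega^{u-1,v})$ follow from \eqref{B_Z} and \eqref{rost} exactly as you say (you even silently repair the paper's typo $\overline{\Ker\delta_0}$ to $\overline{\im\,\delta_{0,-1}}$ in the first decomposition), and the reduction, after splitting off the common summand $\overline{\im d_{0,1}}$, to the harmonic statement $P(z_2^{u,v})=\mathcal{H}_2^{u,v}\oplus\im d_1=\Ker d_1$ is sound, as is your proof of the inclusion $P(z_2^{u,v})\subseteq\Ker d_1$ via $d_{1,0}\overline{\im d_{0,1}}\subseteq\overline{\im d_{0,1}}$.

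The genuine gap is the reverse inclusion $\Ker d_1\subseteq P(z_2^{u,v})$: given $\omega_H\in\mathcal{H}_1^{u,v}$ with $d_{1,0}\omega_H\in\overline{\im d_{0,1}}$, you must produce a correction $c\in\overline{\im d_{0,1}}$ with $d_{1,0}(\omega_H+c)\in\im d_{0,1}$ on the nose, and this you never do — you identify it as the crux and then cite it back to \cite[Section 5.1]{Al-Ko}, the very source from which the theorem is quoted, which makes the attempt circular at its one decisive point. Moreover, \ref{Thm Al-Ko} as invoked is not the right tool for this direction: it converts sequences of unit forms with Rayleigh quotient $o(h_i^2)$ into limit points in $\mathcal{H}_2$, i.e.\ it detects membership in $\mathcal{H}_2$, but it does not manufacture the smooth filtered cocycle lifting the $d_1$-cocycle condition from the closure $\overline{\im d_{0,1}}$ to $\im d_{0,1}$ itself. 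Note also that the second Hodge-type decomposition $\mathcal{H}_1=\mathcal{H}_2\oplus\im d_1\oplus\im\delta_1$ with genuinely closed images (which rests on the finite-dimensionality of $\mathcal{H}_2$ coming from the adiabatic small-eigenvalue analysis) is itself part of the same Section 5.1 package in \cite{Al-Ko}; granting it, what remains of the theorem is precisely the unproved inclusion. In short: your reduction is correct and consistent with how \'Alvarez-L\'opez and Kordyukov organize the argument, but as a standalone proof it stops exactly where the analysis begins, so it establishes only one inclusion of the claimed equality.
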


\begin{remark}
In \cite{Al-Ko}, the above are proven for general $\mathcal{H}_k$, not only for $k=2$.
\end{remark}

\hfill

Two direct consequences will be of interest for us:

\begin{lemma}
\label{Lema 1}If $\alpha \in \Omega ^{u,v}$ verifies
\begin{eqnarray}
d_{0,1}\alpha =0, \quad \delta _{0,-1}\alpha =0,  \label{d_0} \\
d_{1,0}\alpha =0, \quad \delta _{-1,0}\alpha =0,  \nonumber
\end{eqnarray}
then $\alpha \in \mathcal{H}_2^{u,v}$.
\end{lemma}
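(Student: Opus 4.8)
The plan is to derive the statement directly from the adiabatic-limit criterion \ref{Thm Al-Ko} of \'Alvarez-L\'opez and Kordyukov \cite{Al-Ko}, applied to the \emph{constant} sequence equal to $\alpha$. First I would dispose of the trivial case $\alpha=0$, and otherwise normalize by replacing $\alpha$ with $\alpha/\|\alpha\|$ so that $\|\alpha\|=1$; scaling does not affect the four hypotheses \eqref{d_0}. The whole content then reduces to estimating $\langle\Delta_h\alpha,\alpha\rangle$ as $h\downarrow 0$.

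The key computation is as follows. Since $\delta_h$ is the formal $g$-adjoint of $d_h$ and $\Delta_h=d_h\delta_h+\delta_h d_h$, integration by parts gives
\[
\langle\Delta_h\alpha,\alpha\rangle=\|d_h\alpha\|^2+\|\delta_h\alpha\|^2 .
\]
Now I would insert the expansions \eqref{d_delta_h}. From $d_h\alpha=d_{0,1}\alpha+h\,d_{1,0}\alpha+h^2 d_{2,-1}\alpha$ the hypotheses $d_{0,1}\alpha=0$ and $d_{1,0}\alpha=0$ kill the first two terms, leaving $d_h\alpha=h^2 d_{2,-1}\alpha$; symmetrically $\delta_{0,-1}\alpha=0$ and $\delta_{-1,0}\alpha=0$ give $\delta_h\alpha=h^2\delta_{-2,1}\alpha$. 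Hence
\[
\langle\Delta_h\alpha,\alpha\rangle=h^4\bigl(\|d_{2,-1}\alpha\|^2+\|\delta_{-2,1}\alpha\|^2\bigr),
\]
which is $O(h^4)$ and in particular lies in $o(h^2)$ as $h\downarrow 0$.

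With this estimate the conclusion is immediate. Pick any sequence $h_i\downarrow 0$ and set $\alpha_i:=\alpha$ for every $i$, so that $\|\alpha_i\|=1$ and $\langle\Delta_{h_i}\alpha_i,\alpha_i\rangle\in o(h_i^2)$. By \ref{Thm Al-Ko} some subsequence of $\alpha_i$ converges in $\mathcal{H}_2^{r}$, where $r=u+v$. Because the sequence is constant, that limit is $\alpha$ itself, so $\alpha\in\mathcal{H}_2^{r}$. Finally, since $\alpha$ has pure bidegree $(u,v)$ we have $\pi_{u,v}\alpha=\alpha$, whence $\alpha\in\pi_{u,v}(\mathcal{H}_2)=\mathcal{H}_2^{u,v}$, as claimed.

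I expect the only genuinely delicate point to be the adjointness invoked in the first display: that $\delta_h$ is the formal adjoint of $d_h$ with respect to the \emph{fixed} metric $g$ (not the rescaled $g_h$). This holds because $\Theta_h$ is an isometry from $(\Omega,g_h)$ to $(\Omega,g)$, so that $\delta_h=\Theta_h\delta_{g_h}\Theta_h^{-1}$ is the $g$-conjugate of the $g_h$-adjoint $\delta_{g_h}$ of $d$, and is therefore the $g$-adjoint of $d_h=\Theta_h d\,\Theta_h^{-1}$. Everything else is formal; in particular, feeding a constant sequence to \ref{Thm Al-Ko} is legitimate, since the theorem only asserts existence of a convergent subsequence and for a constant sequence the limit can only be the constant.
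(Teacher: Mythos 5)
Your proof is correct and follows essentially the same route as the paper: normalize $\alpha$, feed the constant sequence $\alpha_i=\alpha$ into the adiabatic-limit criterion, and use the expansions \eqref{d_delta_h} together with the hypotheses \eqref{d_0} to get $\langle\Delta_{h_i}\alpha,\alpha\rangle=h_i^4\left\|(d_{2,-1}+\delta_{-2,1})\alpha\right\|^2\in o(h_i^2)$, which is exactly the paper's computation (your split $\|d_{2,-1}\alpha\|^2+\|\delta_{-2,1}\alpha\|^2$ agrees since the two terms have different total degrees and are orthogonal). Your extra remarks on the adjointness of $\delta_h$ with respect to the fixed metric $g$ and on extracting the bidegree via $\pi_{u,v}$ only make explicit points the paper leaves implicit.
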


\begin{proof} We fix $\alpha _i=\alpha $. Clearly we can assume, without
restricting the generality, that $\left\| \alpha \right\| =1$. Then, for any $h_i\downarrow 0$, from \eqref{d_delta_h} and \eqref{d_0} we obtain:
\begin{equation}
\begin{split} \nonumber
\left\langle \Delta _{h_i}\alpha ,\alpha \right\rangle =\left\|
D_{h_i}\alpha \right\| ^2
=h_i^4\left\|(d_{2,-1}+\delta _{-2,1})\alpha \right\| ^2 \in o\left( h_i^2\right).
\end{split}
\end{equation}
The result now follows from   \ref{Thm Al-Ko}.
\end{proof}

\begin{lemma}
\label{Lem 2}If $\alpha \in \mathcal{H}_2^{u,v}$, then there exist $\beta_i\in \Omega ^{u+1,v-1}$ and
$\gamma _i\in \Omega ^{u-1,v+1}$ such that

\begin{itemize}
\item[(i)] $d_{0,1}\alpha =0$, \quad $\delta _{0,-1}\alpha =0$,

\item[(ii)] $d_{1,0}\alpha +d_{0,1}\beta _i\longrightarrow 0$,\quad $\delta _{-1,0}\alpha
+\delta _{0,-1}\gamma _i\longrightarrow 0$ in the $L^2$ norm.
\end{itemize}

\end{lemma}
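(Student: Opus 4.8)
The plan is to read off part (i) directly from the inclusion $\mathcal{H}_2\subseteq\mathcal{H}_1$ and to obtain part (ii) from the description of $\mathcal{H}_2$ as the harmonic space of the induced first-order operator on $\mathcal{H}_1$, combined with the leafwise Hodge decomposition. Recall from the construction that $\mathcal{H}_1=\operatorname{Ker}\Delta_0=\operatorname{Ker} d_{0,1}\cap\operatorname{Ker}\delta_{0,-1}$ is the space of leafwise-harmonic forms, and that $\mathcal{H}_2\subseteq\mathcal{H}_1$. Hence, for $\alpha\in\mathcal{H}_2^{u,v}$ we immediately have $d_{0,1}\alpha=0$ and $\delta_{0,-1}\alpha=0$, which is exactly (i).

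For (ii), let $P_1$ be the orthogonal projection onto $\mathcal{H}_1$. First I would identify the induced differential $d_1$ on $\mathcal{H}_1$: for $\alpha\in\mathcal{H}_1^{u,v}$ the relation $d_{0,1}\alpha=0$ gives $d\alpha=d_{1,0}\alpha+d_{2,-1}\alpha$, and since only $d_{1,0}\alpha$ lies in bidegree $(u+1,v)$, the induced map is $d_1\alpha=P_1\,d_{1,0}\alpha$; dually $\delta_1\alpha=P_1\,\delta_{-1,0}\alpha$. As $\mathcal{H}_2=\operatorname{Ker} d_1\cap\operatorname{Ker}\delta_1$ inside $\mathcal{H}_1$, for $\alpha\in\mathcal{H}_2^{u,v}$ we obtain $P_1\,d_{1,0}\alpha=0$ and $P_1\,\delta_{-1,0}\alpha=0$, i.e. $d_{1,0}\alpha\perp\mathcal{H}_1$ and $\delta_{-1,0}\alpha\perp\mathcal{H}_1$.

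Next I would exploit $d^2=0$ decomposed along the bigrading, whose bidegree $(1,1)$ component is $d_{0,1}d_{1,0}+d_{1,0}d_{0,1}=0$. Applied to $\alpha$ with $d_{0,1}\alpha=0$ this yields $d_{0,1}(d_{1,0}\alpha)=0$, so the smooth form $d_{1,0}\alpha\in\Omega^{u+1,v}$ is $d_{0,1}$-closed. Feeding this into the leafwise splitting $\Omega=\mathcal{H}_1\oplus\overline{\operatorname{Im} d_{0,1}}\oplus\overline{\operatorname{Im}\delta_{0,-1}}$ (which respects the transverse degree, since $d_{0,1}$ and $\delta_{0,-1}$ preserve it), write $d_{1,0}\alpha=h+a+b$ with $h\in\mathcal{H}_1$, $a\in\overline{\operatorname{Im} d_{0,1}}$, $b\in\overline{\operatorname{Im}\delta_{0,-1}}$. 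The orthogonality above forces $h=0$; moreover, as $d_{1,0}\alpha$ is smooth and $d_{0,1}$-closed, the adjoint relation gives $\langle d_{1,0}\alpha,\delta_{0,-1}\eta\rangle=\langle d_{0,1}d_{1,0}\alpha,\eta\rangle=0$ for all $\eta$, whence $\langle d_{1,0}\alpha,b\rangle=\|b\|^2=0$ and $b=0$. Therefore $d_{1,0}\alpha\in\overline{\operatorname{Im}(d_{0,1}\colon\Omega^{u+1,v-1}\to\Omega^{u+1,v})}$, producing smooth $\beta_i\in\Omega^{u+1,v-1}$ with $d_{0,1}\beta_i\to -d_{1,0}\alpha$, i.e. $d_{1,0}\alpha+d_{0,1}\beta_i\to 0$ in $L^2$. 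The construction of $\gamma_i$ is entirely dual: the $(1,1)$ component of $\delta^2=0$ gives $\delta_{0,-1}(\delta_{-1,0}\alpha)=0$, and the same splitting argument places $\delta_{-1,0}\alpha$ in $\overline{\operatorname{Im}(\delta_{0,-1}\colon\Omega^{u-1,v+1}\to\Omega^{u-1,v})}$, yielding $\gamma_i\in\Omega^{u-1,v+1}$ with $\delta_{-1,0}\alpha+\delta_{0,-1}\gamma_i\to 0$.

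The main obstacle I anticipate is the bookkeeping of the induced operator: one must be sure that $\mathcal{H}_2$ is genuinely cut out by $P_1 d_{1,0}$ and $P_1\delta_{-1,0}$ and not by operators carrying additional correction terms, and that the orthogonality and projection statements remain compatible with the $\cac^\infty$-closures appearing in the \'Alvarez-L\'opez--Kordyukov decompositions while still delivering $L^2$-convergence of the $\beta_i$ and $\gamma_i$. Once the identity $d_1=P_1 d_{1,0}$ on $\mathcal{H}_1$ and the splitting $\operatorname{Ker} d_{0,1}=\mathcal{H}_1\oplus\overline{\operatorname{Im} d_{0,1}}$ are secured, the remaining steps are routine.
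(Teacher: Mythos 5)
Your proof is correct, and for part (ii) it takes a genuinely different route than the paper, although both ultimately rest on the \'Alvarez-L\'opez--Kordyukov machinery quoted in Section 2. Part (i) is identical in both arguments ($\mathcal{H}_2\subset\mathcal{H}_1=\operatorname{Ker}\Delta_0$). For (ii), the paper starts from its quoted Theorem describing $\mathcal{H}_2^{u,v}$ inside $z_2^{u,v}+\overline{b_0^{u,v}}$: it splits $\alpha=\alpha'+\alpha''$, handles $\alpha'\in z_2^{u,v}$ via the definition of $Z_2^{u,v}$ (which directly produces $\beta$ with $d_{1,0}\alpha'+d_{0,1}\beta=0$), handles $\alpha''\in\overline{b_0^{u,v}}$ via the projection identity $d_{1,0}\operatorname{P}=\operatorname{P}d_{1,0}\operatorname{P}$ from \cite[Lemma 2.3]{Al-Ko}, and then obtains the $\gamma_i$ by applying the Hodge star (passing to the oriented double cover if needed). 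You instead use the other half of the A--K Section 5.1 package: the identification of $\mathcal{H}_2$ as the joint kernel of the compressed operators $d_1=P_1 d_{1,0}$ and $\delta_1=P_1\delta_{-1,0}$ on $\mathcal{H}_1$, so that $d_{1,0}\alpha\perp\mathcal{H}_1$ and $\delta_{-1,0}\alpha\perp\mathcal{H}_1$, combined with the anticommutation $d_{0,1}d_{1,0}+d_{1,0}d_{0,1}=0$ and the leafwise Hodge decomposition $\Omega=\mathcal{H}_1\oplus\overline{\operatorname{Im}d_{0,1}}\oplus\overline{\operatorname{Im}\delta_{0,-1}}$ with mutually $L^2$-orthogonal summands; killing the $\mathcal{H}_1$- and $\overline{\operatorname{Im}\delta_{0,-1}}$-components places $d_{1,0}\alpha$ in $\overline{\operatorname{Im}d_{0,1}}$, which is exactly the conclusion. (Incidentally, you use the decomposition in its correct form: the paper's display $\Omega=\mathcal{H}_1\oplus\overline{\operatorname{Im}d_0}\oplus\overline{\operatorname{Ker}\delta_0}$ is evidently a typo for $\overline{\operatorname{Im}\delta_0}$.) What each approach buys: the paper's argument needs only the weaker membership $\alpha\in z_2^{u,v}+\overline{b_0^{u,v}}$ but pays with the extra commutation lemma and the orientability detour for $\gamma_i$; yours needs the (also cited) inductive description of $\mathcal{H}_2$ via $\Delta_1$, but then produces both convergences by one symmetric orthogonality computation, with no star operator and no double cover. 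The caveat you flagged --- that $d_1$ carries no correction terms at the first stage --- is indeed settled in \cite[Section 5.1]{Al-Ko}, which is the very construction the paper invokes, so your argument is complete given that citation; note only that the $C^\infty$-closures there are what guarantee your approximating sequences $\beta_i$, $\gamma_i$ can be taken smooth while the convergence is read in $L^2$ on the closed manifold.
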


\begin{proof} (i) is just  \ref{mean curv e 0}, taking into account that
$\mathcal{H}_2\subset \mathcal{H}_1$, and hence  $\Delta _0\alpha =0$.

To prove (ii), note that $\alpha \in \mathcal{H}_2^{u,v}$ implies ( \emph{via}  \ref{Thm noua})
 $\alpha \in z_2^{u,v}+\overline{b_0^{u,v}}$.
Then $\alpha =\alpha ^{\prime }+\alpha ^{\prime \prime }$, with
$\alpha^{\prime }\in z_2^{u,v}$ and $\alpha ^{\prime \prime }\in \overline{b_0^{u,v}}$.
If $\alpha ^{\prime }\in z_2^{u,v}$, then from the definition \eqref{B_Z} of the spaces $Z_k^{u,v}$
there exists a differential form $\beta \in \Omega^{u+1,v-1}$ such that
\begin{equation}
d_{1,0}\alpha ^{\prime }+d_{0,1}\beta =0,  \label{conditie z_2}
\end{equation}
and thus $d_{1,0}\alpha ^{\prime }\in \overline{d_{0,1}}(\Omega ^{u+1,v-1})$.

As $d^2=0$, one has (\cite{Al-Ko}):
\begin{equation}
d_{1,0}d_{0,1}+d_{0,1}d_{1,0}=0.  \label{d_1_0 si d_0_1}
\end{equation}
Let  $\operatorname{P}$ be the projection of
$\Omega $ to $\overline{d_{0,1}}(\Omega)$. Then from \eqref{d_1_0 si d_0_1}
we derive  (\cite[Lemma 2.3]{Al-Ko}):
\[
d_{1,0}\operatorname{P}=\operatorname{P}d_{1,0}\operatorname{P},
\]
and hence $d_{1,0}\alpha ^{\prime \prime }\in \overline{d_{0,1}}(\Omega ^{u+1,v-1})$. Then
$d_{1,0}\alpha \in \overline{d_{0,1}}(\Omega ^{u+1,v-1})$, and clearly there is a sequence $\beta_i$ such that
\[
d_{1,0}\alpha +d_{0,1}\beta _i\longrightarrow 0
\]
in the $L^2$ norm.

For the last part we use the idea in \cite[Corollary 5.15]{Al-Ko}. If  $M$ is not orientable, take its two-sheeted oriented
cover. Denoting by $*$ the Hodge star operator, we
obtain the above result for $*\alpha $, with a corresponding sequence $\beta_i$. As
\begin{equation}
\begin{split} \nonumber
\ast d_{1,0} &=(-1)^{r+1}\delta _{-1,0}*, \\
\ast d_{0,1} &=(-1)^{r+1}\delta _{0,-1}*,
\end{split}
\end{equation}
when the above operators are applied to a differential form of degree $r$,
we obtain the necessary sequence $\gamma _i$ from $\beta _i$.
\end{proof}

\section{A lower bound for $E_2^{u,1}$\label{section III}}

In this section, $(M,J,g)$ is a closed Vaisman manifold.

We start with a technical result about the operators defined in formula  \eqref{rost}:

\begin{lemma}\label{Lema 3}On a closed Vaisman manifold the 1-st order differential operators
$d_{0,1}$, $d_{1,0}$, and their adjoints vanish on $\theta$ and $\theta^c$:
\begin{equation*}
\begin{split}
d_{1,0}\theta&=0, \quad \delta_{-1,0}\theta=0,\quad  d_{1,0}\theta^c=0,\quad  \delta_{-1,0}\theta^c=0,\\
d_{0,1}\theta&=0,\quad \delta_{0,-1}\theta=0,\quad d_{0,1}\theta^c=0,\quad \delta_{0,-1}\theta^c=0.
\end{split}
\end{equation*}
\end{lemma}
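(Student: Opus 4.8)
The plan is to compute the action of the four first-order operators $d_{0,1}$, $d_{1,0}$ and their adjoints $\delta_{0,-1}$, $\delta_{-1,0}$ directly on the forms $\theta$ and $\theta^c$, using the connection formulas of Lemma \ref{connect_Vaisman} and the splitting \eqref{rost} of $d$ and $\delta$. The key observation is that $\theta$ and $\theta^c$ are purely leafwise forms, i.e. $\theta,\theta^c\in\Omega^{0,1}$, since they are dual to the leafwise vector fields $U$ and $V$. Therefore the bigraded pieces of $d$ and $\delta$ can be read off their target degrees: $d_{0,1}\theta\in\Omega^{0,2}$, $d_{1,0}\theta\in\Omega^{1,1}$, while $\delta_{0,-1}\theta\in\Omega^{0,0}$ and $\delta_{-1,0}\theta\in\Omega^{-1,1}=0$ automatically. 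So one of the eight identities, $\delta_{-1,0}\theta=0$ (and likewise $\delta_{-1,0}\theta^c=0$), is immediate for degree reasons, and I would dispose of these first.

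Next I would handle the exterior-derivative pieces. Since $\theta$ is parallel, $d\theta=0$, and this single fact splits across the bigrading to give $d_{0,1}\theta=0$ and $d_{1,0}\theta=0$ (the third piece $d_{2,-1}\theta$ lands in $\Omega^{2,0}$ and vanishes separately, but is irrelevant here). For $\theta^c=-J\theta$ I cannot invoke closedness directly, so I would compute $d\theta^c$ from the structure of the Vaisman manifold: using $\nabla\theta=0$ together with the relation $\nabla_{e_i}V=J(e_i)$ from \eqref{conex_Vaisman}, one checks that $d\theta^c=\omega-\theta\wedge\theta^c$ (up to the usual normalization), where $\omega$ is the fundamental two-form. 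The term $\theta\wedge\theta^c$ is leafwise, hence lies in $\Omega^{0,2}$, contributing only to $d_{0,1}\theta^c$; the transverse two-form part of $\omega$ lies in $\Omega^{2,0}$, contributing only to $d_{2,-1}\theta^c$. In particular the $\Omega^{1,1}$-component of $d\theta^c$ is zero, giving $d_{1,0}\theta^c=0$, and I would verify that the leafwise and transverse pieces balance so that $d_{0,1}\theta^c=0$ as well—this is where the identity $\omega=\omega_Q+\theta\wedge\theta^c$ (transverse part plus leafwise part) is used.

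For the codifferentials $\delta_{0,-1}\theta$ and $\delta_{0,-1}\theta^c$ I would argue via adjunction rather than by computing $\delta$ explicitly. Since $\delta_{0,-1}$ is the adjoint of $d_{0,1}$, the cleanest route is to use the leafwise Laplacian framework of Remark \ref{mean curv e 0}: the canonical foliation is minimal (mean curvature zero), so $\delta_{0,-1}$ coincides with the genuine leafwise codifferential, and computing $\delta_{0,-1}\theta^c$ reduces to $-\sum_a \iota_{X_a}\nabla^{T\mathcal F}_{X_a}\theta^c$ over a leafwise orthonormal frame $\{U,V\}$; the relations \eqref{conex_forme_Vaisman} ($\nabla_U U^\flat=\nabla_V V^\flat=\nabla_U V^\flat=\nabla_V U^\flat=0$) make every term vanish, and similarly for $\theta$. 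I expect the main obstacle to be precisely the $\theta^c$ computations: unlike $\theta$, it is not parallel, so one must carefully assemble $d\theta^c$ from the connection data and correctly identify which bigraded slot each term occupies. Once the formula for $d\theta^c$ is pinned down and the minimality of the foliation is invoked to tame the codifferentials, all eight vanishing statements follow; I would present the parallel-$\theta$ cases first as a warm-up and then treat $\theta^c$ as the substantive step.
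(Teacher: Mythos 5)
Your proposal is correct and takes essentially the same route as the paper: you split $d\theta=0$ by bidegree, show $d\theta^c$ is purely transverse (you derive $d\theta^c=\omega-\theta\wedge\theta^c=\omega_Q$ from the connection formulas \eqref{conex_Vaisman}, where the paper instead quotes Vaisman's coordinate expression $d\theta^c=-ig_{a\bar b}\,dz^a\wedge d\bar z^b$ --- the same fact), compute $\delta_{0,-1}$ as the genuine leafwise codifferential using minimality and \eqref{conex_forme_Vaisman}, and dispose of $\delta_{-1,0}$ on $\Omega^{0,1}$ by degree reasons, exactly as the paper's closing appeal to the bigrading. No gaps.
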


\begin{proof} From $d\theta =0$, we obtain
\[
d_{0,1}\theta =0, \quad d_{1,0}\theta =0.
\]
On the other hand, using the transverse complex coordinates $\{z^a\}$ and the metric coefficients
$g_{a\bar b}$ with respect to these coordinates, we have (\cite{Vai}):

\[
d\theta^{c} =-ig_{a\bar b}dz^a\wedge d\bar z^b,
\]
and hence
\begin{equation}
d_{0,1}\theta^{c} =0, \quad d_{1,0}\theta^{c} =0.  \nonumber
\end{equation}
Along the leaves of $\caf$, the de\thinspace Rham codifferential
coincide with the codifferential operator on the leaves (considered as
immersed submanifolds). Applying \eqref{conex_forme_Vaisman}, we get
\begin{equation}\label{delta_0_omega}
\delta _{0,-1}\theta =-\iota _U\nabla _U^{T\mathcal{F}}\theta
-\iota_V\nabla_V^{T\mathcal{F}}\theta =0.
\end{equation}
Similarly, $\delta _{0,-1}\theta^{c} =0$. As $\theta,\theta^{c} \in \Omega^{0,1}$, the result follows
considering the bigrading.
\end{proof}

\hfill

We now give the  lower bound estimate. Recall that $e_i$ are the basic Betti numbers with respect to $\caf$.

\begin{theorem}\label{Thm 1}If $(M,\mathcal{F})$ is the canonical foliation of a closed Vaisman
manifold, then:
\[
\dim E_2^{u,1}\ge 2\, e_u, \quad \text{for} \quad 0\le u\le 2n.
\]
\end{theorem}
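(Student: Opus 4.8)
The plan is to exhibit, for each fixed $u$ with $0\le u\le 2n$, a $2e_u$-dimensional subspace of $\mathcal{H}_2^{u,1}$ and then carry the estimate to $E_2^{u,1}$ via the isomorphism \eqref{E_H}. Since the canonical foliation is taut (indeed minimal, see \ref{mean curv e 0}), basic Hodge theory represents $H_b^u$ by $e_u$ linearly independent basic-harmonic forms $\alpha_1,\dots,\alpha_{e_u}\in\Omega^{u,0}$, each of which satisfies $d_{0,1}\alpha_j=\delta_{0,-1}\alpha_j=0$ together with $d_{1,0}\alpha_j=\delta_{-1,0}\alpha_j=0$. As $\theta,\theta^c\in\Omega^{0,1}$, the $2e_u$ forms $\alpha_j\wedge\theta$ and $\alpha_j\wedge\theta^c$ all lie in $\Omega^{u,1}$, and the core of the proof is to show that each of them lies in $\mathcal{H}_2^{u,1}$.

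Membership will be established through \ref{Lema 1}, so I must check that $d_{0,1}$, $d_{1,0}$, $\delta_{0,-1}$ and $\delta_{-1,0}$ all annihilate these products. The two differential conditions are formal: $d_{0,1}$ and $d_{1,0}$ are derivations compatible with the bigrading, so the Leibniz rule gives $d_{0,1}(\alpha_j\wedge\theta)=(d_{0,1}\alpha_j)\wedge\theta+(-1)^u\alpha_j\wedge d_{0,1}\theta=0$ by the harmonicity of $\alpha_j$ and \ref{Lema 3}, and the identical computation handles $d_{1,0}$ and the factor $\theta^c$. The component $d_{2,-1}(\alpha_j\wedge\theta^c)$, which records $d\theta^c\in\Omega^{2,0}$, is in general nonzero, but it is irrelevant to \ref{Lema 1}.

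The codifferential conditions are the substantive part. For $\theta$ I would use that $U$ is parallel, by \eqref{conex_Vaisman}: writing $\delta=-\sum_A\iota_{E_A}\nabla_{E_A}$ in an adapted frame $\{e_i,U,V\}$ and using $\nabla\theta=0$ and $\nabla_U\alpha_j=\Lie_U\alpha_j=0$, one finds $\delta(\alpha_j\wedge\theta)=(\delta\alpha_j)\wedge\theta$, whose bidegree-$(u,0)$ part vanishes and whose bidegree-$(u-1,1)$ part is $(\delta_{-1,0}\alpha_j)\wedge\theta=0$. The anti-Lee form $\theta^c$ is the delicate case, because $V$ is not parallel: by \eqref{conex_Vaisman} one has $\nabla_{e_i}V=Je_i$ while $\nabla_UV=\nabla_VV=0$. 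Expanding $\delta(\alpha_j\wedge\theta^c)$ in the same frame, the transverse summands produce a bidegree-$(u,0)$ term $\pm\sum_i(\iota_{e_i}\alpha_j)\wedge(Je_i)^\flat$, while the $V$-summand produces $\pm\nabla_V\alpha_j$, also of bidegree $(u,0)$. The basic condition $\Lie_V\alpha_j=0$ identifies $\nabla_V\alpha_j$ with the $J$-twist of $\alpha_j$, and this is exactly what is needed for the two $(u,0)$ contributions to cancel, so that $\delta_{0,-1}(\alpha_j\wedge\theta^c)=0$; the remaining bidegree-$(u-1,1)$ part collapses to $(\delta_{-1,0}\alpha_j)\wedge\theta^c=0$. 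By \ref{Lema 1} we conclude $\alpha_j\wedge\theta,\ \alpha_j\wedge\theta^c\in\mathcal{H}_2^{u,1}$.

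Finally, independence: if $\sum_j(a_j\,\alpha_j\wedge\theta+b_j\,\alpha_j\wedge\theta^c)=0$, then contracting with $U$ and using $\theta(U)=1$, $\theta^c(U)=0$, $\iota_U\alpha_j=0$ gives $(-1)^u\sum_j a_j\alpha_j=0$, hence all $a_j=0$ since the $\alpha_j$ are independent; contracting with $V$ forces all $b_j=0$. Thus $\dim\mathcal{H}_2^{u,1}\ge 2e_u$, and \eqref{E_H} yields $\dim E_2^{u,1}\ge 2e_u$. I expect the real obstacle to be exactly the codifferential computation for $\theta^c$: since $\theta^c$ is not parallel, $\delta$ does not pass through the wedge product cleanly, and one must balance the transverse $J$-twisting term against the leafwise derivative $\nabla_V\alpha_j$ and verify their precise cancellation, including the correct signs and bidegree bookkeeping. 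By contrast the differential conditions and the independence argument are essentially formal.
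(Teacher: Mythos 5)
Your proposal is correct and follows essentially the same route as the paper's proof: wedge basic $\Delta_b$-harmonic representatives of $H_b^u$ with $\theta$ and $\theta^c$, check that $d_{0,1}$, $\delta_{0,-1}$, $d_{1,0}$, $\delta_{-1,0}$ annihilate these products (the paper packages the required vanishings into \ref{Lema 3} and the connection identities \eqref{conex_forme_Vaisman}, whereas you verify the $\delta_{0,-1}$-cancellation for $\theta^c$ directly in an adapted frame --- a correct computation, since the transverse twist $\sum_i(\iota_{e_i}\alpha_j)\wedge(Je_i)^\flat$ coming from $\nabla_{e_i}V=Je_i$ is exactly offset by $(-1)^{u+1}\nabla_V\alpha_j$), and conclude via \ref{Lema 1} and the isomorphism \eqref{E_H}. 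Your explicit linear-independence check by contracting with $U$ and $V$ fills in a point the paper leaves implicit, but it does not alter the approach.
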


\begin{proof} We prove that if $\eta\in \Omega _b^u$ is a basic harmonic
differential form, then $\eta \wedge \theta $ and $\eta\wedge \theta^{c}
\in \mathcal{H}_2^{u,1}$.

 Using  \ref{mean curv e 0}, we get
\begin{equation}
\begin{split} \nonumber
d_{0,1}(\eta \wedge \theta ) &= (-1)^u \eta \wedge d_{0,1}\theta , \\
\delta _{0,-1}(\eta \wedge \theta ) &= (-1)^u \eta \wedge \delta _{0,-1}\theta .
\end{split}
\end{equation}

From \ref{Lema 3}  we then have
\[
d_{0,1}(\eta \wedge \theta )=0,\,\,\delta _{0,-1}(\eta \wedge \theta)=0.
\]

We show now that $d_{1,0}(\eta \wedge \theta )=0$ and
$\delta_{-1,0}(\eta \wedge \theta )=0$. Because $\eta $ is basic and
harmonic with respect to the basic Laplace operator $\Delta_b$, using  \ref{mean curv e 0}
we get
\[
d_{1,0}\eta =0, \quad \delta _{-1,0}\eta =0.
\]

From the hypothesis,  \ref{Lema 3} and \eqref{conex_forme_Vaisman} we  obtain
\begin{equation*}
d_{1,0}(\eta \wedge \theta ) =d_{1,0}\eta \wedge \theta +(-1)^u\eta
\wedge d_{1,0}\theta^{c} =0.
\end{equation*}

Then, again using \eqref{conex_forme_Vaisman}, we have:
\begin{equation}
\begin{split} \nonumber
\delta _{-1,0}(\eta \wedge \theta ) &=(-\sum_i\iota _{e_i}\nabla_{e_i}
-\iota _U\nabla _U-\iota _V\nabla _V)_{-1,0}\theta \\
&=\delta _{-1,0}\eta \wedge \theta +(-1)^u\eta \wedge \delta_{-1,0}\theta \\
& +\sum_i\iota _{e_i}\eta \wedge \nabla _{e_i}^{T\mathcal{F}}\theta
+(-1)^u\nabla _U^{T\mathcal{F}}\eta \wedge \iota _U\theta \\
& +(-1)^u\nabla _V^{T\mathcal{F}}\eta \wedge \iota _V\theta=0.
\end{split}
\end{equation}
The conclusion comes from  \ref{Lema 1} and equation \eqref{E_H}.

As for $\theta^c$, the proof is similar.
\end{proof}

\begin{corollary} Using \cite[Theorem 4.2]{Vai}, we can also express the lower bound using the Betti numbers of the underlying manifold:
\[
\dim E_2^{u,1}\ge 2\, (-1)^u\sum_{i=0}^{[u/2]}\left(\left[\frac{u}{2}\right]-i+1 \right)(b_{2i}-b_{2i-(-1)^u}), \quad \text{for} \quad 0\le u\le n.
\]
For $n+1\le u\le 2n$ we can use the Poincar\'e duality.
\end{corollary}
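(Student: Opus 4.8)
The plan is to substitute the explicit Betti-number relations of \cite[Theorem 4.2]{Vai} into the bound $\dim E_2^{u,1}\ge 2\,e_u$ of \ref{Thm 1}, and then to invert those relations so as to isolate $e_u$. For the canonical foliation the transverse structure is K\"ahler of complex dimension $n$, so $\{e_i\}$ satisfies transverse hard Lefschetz and Poincar\'e duality; coupled with the Sasakian (mapping-torus) description of $M$, the relationship of \cite[Theorem 4.2]{Vai} takes, in the range $0\le p\le n$, the form
\begin{equation*}
b_p=e_p+e_{p-1}-e_{p-2}-e_{p-3},
\end{equation*}
with the convention $e_j=0$ for $j<0$. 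The genuine content of the corollary is then the closed-form solution of this recursion for $e_u$; once it is obtained, multiplication by $2$ together with \ref{Thm 1} yields the stated inequality.

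To perform the inversion I would set $f_p:=e_p+e_{p-1}$, which turns the relation into $b_p=f_p-f_{p-2}$ and hence telescopes to $f_p=\sum_{j\ge 0}b_{p-2j}$. The remaining two-term recursion $e_p=f_p-e_{p-1}$, started from $e_{-1}=0$, solves to
\begin{equation*}
e_u=(-1)^u\sum_{j=0}^{u}(-1)^{j}f_j=(-1)^u\sum_{j=0}^{u}(-1)^{j}\sum_{\substack{i\ge 0\\ j-2i\ge 0}}b_{j-2i}.
\end{equation*}
Interchanging the order of summation, the Betti number $b_m$ is collected once for each $j\in[m,u]$ with $j\equiv m\pmod 2$, always with sign $(-1)^{m}$, and there are exactly $\lfloor (u-m)/2\rfloor+1$ such indices. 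Writing $m=2i$ for the even part gives the weight $[u/2]-i+1$, while $m$ odd contributes the neighbouring terms $b_{2i-1}$ (if $u$ is even) or $b_{2i+1}$ (if $u$ is odd); assembling the two pieces produces exactly $(-1)^u\sum_{i=0}^{[u/2]}\bigl([u/2]-i+1\bigr)\bigl(b_{2i}-b_{2i-(-1)^u}\bigr)$, which is the asserted formula for $0\le u\le n$.

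For $n+1\le u\le 2n$ I would not repeat the computation but instead invoke the transverse Poincar\'e duality of the minimal foliation $\caf$ (see \cite[Chapter 7]{Ton} and \cite{Mas}), namely $e_u=e_{2n-u}$. Since $0\le 2n-u\le n-1$, the already-established formula applies to the index $2n-u$, and \ref{Thm 1} then gives $\dim E_2^{u,1}\ge 2\,e_u=2\,e_{2n-u}$, covering the full range.

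I expect the only delicate step to be the bookkeeping inside the inversion: one must control both the alternating sign $(-1)^{j}$ and the multiplicity with which each $b_m$ is accumulated through the nested sums, since it is precisely this multiplicity that converts the constant $1$'s of the telescoped $f_j$ into the linear coefficient $[u/2]-i+1$. A clean even/odd split on $u$ --- which also fixes whether the neighbouring index is $2i-1$ or $2i+1$, i.e.\ the sign hidden in $b_{2i-(-1)^u}$ --- makes the two cases of the displayed formula fall out simultaneously.
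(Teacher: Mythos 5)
Your proposal is correct and follows essentially the same route the paper intends: the corollary is stated there without proof, as an immediate consequence of \ref{Thm 1} and Vaisman's relation $b_p=e_p+e_{p-1}-e_{p-2}-e_{p-3}$ (valid precisely in the range $0\le p\le n$ that you use), and your telescoping inversion via $f_p:=e_p+e_{p-1}$ — which checks out, e.g.\ against the Hopf surface data $e_0=1$, $e_1=0$ — is exactly the bookkeeping the paper leaves implicit. Your handling of $n+1\le u\le 2n$ through the basic Poincar\'e duality $e_u=e_{2n-u}$ of the minimal foliation is likewise what the paper's closing sentence refers to.
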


\begin{remark}
In particular, $\dim E_2^{0,1}\ge 2$.
\end{remark}

\hfill

This can be used to obtain the following obstruction for a 2-di\-men\-sio\-nal foliation on a compact complex manifold to be associated to a Vaisman structure:

\begin{proposition}
\label{criterii}Let $(M,J)$ be a closed, complex, foliated manifold with a $2$-dimensional foliation which admits
a bundle-like metric. If $e_{2n}=\dim E_2^{2n,0}\newline=0$ or $\dim E_2^{0,1}<2$, then the
foliation does not come from a Vaisman structure.
\end{proposition}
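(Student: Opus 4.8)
The statement is a contrapositive reformulation of Theorem~\ref{Thm 1} together with the duality relations recorded just before the Hodge-theoretic subsection, so the plan is to argue that if the foliation \emph{did} arise from a Vaisman structure, the stated numerical conditions would be violated. Concretely, suppose for contradiction that the given $2$-dimensional foliation $\caf$ on $(M,J)$ is the canonical foliation of some Vaisman metric $g$ compatible with $J$. Because $M$ is compact and $\caf$ admits a bundle-like metric, all the machinery of the preceding sections applies: the spectral sequence $(E_k,d_k)$ is defined, its second-page dimensions are leafwise-homeomorphism invariants, and Theorem~\ref{Thm 1} gives the lower bound $\dim E_2^{u,1}\ge 2e_u$ for $0\le u\le 2n$.

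First I would dispose of the condition $\dim E_2^{0,1}<2$. Specializing Theorem~\ref{Thm 1} to $u=0$ gives $\dim E_2^{0,1}\ge 2e_0$, and since $M$ is connected we have $e_0=\dim H_b^0=1$ (the constant functions are the only basic $0$-forms, and they are closed). Hence a Vaisman origin forces $\dim E_2^{0,1}\ge 2$, contradicting $\dim E_2^{0,1}<2$. This is exactly the content of the \textbf{Remark} immediately following the corollary, so that half is essentially immediate.

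Next I would handle the condition $e_{2n}=\dim E_2^{2n,0}=0$. The identification $\dim E_2^{u,0}=\dim H_b^u=e_u$ is recorded in the text, so $\dim E_2^{2n,0}=e_{2n}$. The key point is that $e_{2n}$ cannot vanish for a canonical foliation. Here I would invoke the transverse Poincar\'e duality for the basic de~Rham complex of a minimal foliation (the canonical foliation has vanishing mean curvature by Remark~\ref{mean curv e 0}), which the paper cites from \cite{Ton,Mas}: the transverse dimension is $q=2n$, so $e_{2n}$ is dual to $e_0=1$, giving $e_{2n}=1\ne 0$. Thus a genuine Vaisman structure always has $\dim E_2^{2n,0}=1>0$, again a contradiction.

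The \emph{main obstacle}, and the step requiring the most care, is the second one: justifying that the \emph{top} basic cohomology $H_b^{2n}$ is one-dimensional. Transverse Poincar\'e duality for Riemannian foliations is subtle and genuinely requires the minimality (harmonicity) hypothesis, since for non-minimal foliations $H_b^{q}$ can vanish and duality fails; this is precisely why the Vaisman hypothesis, which guarantees a minimal canonical foliation via Lemma~\ref{connect_Vaisman}, is indispensable. I would make sure the duality is applied in the correct transverse degree and that the orientability of the transverse structure (furnished by $J$, hence by the transverse K\"ahler form) is available, passing to the two-sheeted oriented cover if necessary as in the proof of Lemma~\ref{Lem 2}. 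Once $e_{2n}=1$ is secured, combining both cases shows that a foliation satisfying either $e_{2n}=0$ or $\dim E_2^{0,1}<2$ cannot come from a Vaisman structure, which is the assertion.
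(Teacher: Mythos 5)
Your proof is correct and takes essentially the paper's (implicit) route: the condition $\dim E_2^{0,1}<2$ is excluded by \ref{Thm 1} at $u=0$ together with $e_0=\dim H_b^0=1$ on a connected manifold, and $e_{2n}=0$ is excluded because the canonical foliation is minimal, so Masa's duality/tautness result for Riemannian foliations (exactly what the paper's surrounding remarks cite from \cite{Mas,Car,Ton}) forces $e_{2n}=e_0=1$. One small slip worth fixing: basic $0$-forms are the leafwise-constant functions, not only the global constants; it is the condition $d_b f=0$ together with connectedness that yields $e_0=1$.
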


\begin{remark}
The top dimensional basic cohomology group $H_b^{2n}$ is related to the \emph{tautness} of the
foliation (see \emph{e.g.} \cite{Car,Mas}), but the geometrical meaning of the spectral term $E_2^{0,1}$
is still not understood. It seems that it could be related to the existence of a minimal sub-flow on the underlying manifold.
We notice here that on a Vaisman manifold there are two such flows, generated by the vector fields $A$ and $B$.
\end{remark}

\section{Quasi-regular foliations\label{section IV}}

In this section we show that if the canonical foliation has compact leaves, then  the inequalities in
 \ref{Thm 1} become equalities.

Recall that a foliated map is a pair
$(\mathcal{U},\varphi)$ such that $\varphi(\mathcal{U})\simeq \mathcal{O}_1\times \mathcal{O}_2$, with $\mathcal{O}_1\in \mathbb{R}^q$, $\mathcal{O}_2\in \mathbb{R}^p$, and $\varphi^{-1}(x_1 \times \mathcal{O}_2)$ is on the same leaf, for any $x_1\in \mathcal{O}_1$. If around any point $x$ there exists a foliated map with the property that any leaf $L$ intersects a transversal through $x$
at most a finite number of times $N(x)$, then the foliation is said to be \emph{quasi-regular}. If, furthermore
$N(x) = 1$ for any $p$, then it is \emph{regular}. The quasi-regularity is known to be equivalent to the compactness
of all leaves \cite{Boy-Gal}.

\begin{theorem}
\label{Thm 2} If the canonical foliation of a closed Vaisman manifold is
quasi-regular, then $\dim E_2^{u,1}=2\, e_u$.
\end{theorem}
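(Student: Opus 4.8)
The plan is to pair the lower bound of \ref{Thm 1} with the reverse inequality $\dim E_2^{u,1}\le 2e_u$. Since $E_2^{u,1}\simeq\mathcal{H}_2^{u,1}$ by \eqref{E_H}, it suffices to construct an injective linear map from $\mathcal{H}_2^{u,1}$ into two copies of the space $\mathcal{H}_b^u$ of basic harmonic $u$-forms, which has dimension $e_u$. Concretely, I would fix $\alpha\in\mathcal{H}_2^{u,1}$, peel off its two leafwise components, and prove that these components are forced to be basic harmonic forms. The inequality then follows because $\alpha$ is recovered from its components.

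First I would record the leafwise structure coming from quasi-regularity. By \eqref{conex_forme_Vaisman} one has $\nabla_UV=\nabla_VU=0$, so $U,V$ commute; being nowhere zero and complete on a compact leaf, they make each leaf a flat torus $T^2$, on which $\theta$ and $\theta^{c}$ restrict to a parallel, hence harmonic, coframe. Setting $\eta_1:=\iota_U\alpha$ and $\eta_2:=\iota_V\alpha$, both in $\Omega^{u,0}$, the leaf-degree-one form $\alpha$ decomposes uniquely as $\alpha=\eta_1\wedge\theta+\eta_2\wedge\theta^{c}$.

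Next I would show the coefficients are basic. By \ref{Lem 2}(i) we have $d_{0,1}\alpha=0$ and $\delta_{0,-1}\alpha=0$, so $\alpha$ is leafwise harmonic, $\Delta_{\mathcal{F}}\alpha=0$. Restricted to a leaf $L\cong T^2$ and using that $\theta|_L,\theta^{c}|_L$ are parallel, this reduces to the vanishing of the scalar torus Laplacian on the coefficients, whence $\eta_1,\eta_2$ are constant along each leaf; equivalently $d_{0,1}\eta_1=d_{0,1}\eta_2=0$, which for transverse forms means precisely that $\eta_1,\eta_2\in\Omega_b^u$. This is the one step where compactness of the leaves, that is quasi-regularity, is indispensable: on a noncompact leaf the associated Cauchy--Riemann system has nonconstant solutions and $\eta_i$ need not be basic.

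Finally I would upgrade ``basic'' to ``basic harmonic'' using \ref{Lem 2}(ii). Since $\eta_i$ are basic, \ref{mean curv e 0} gives $d_{1,0}\eta_i=d_b\eta_i$, and \ref{Lema 3} ($d_{1,0}\theta=d_{1,0}\theta^{c}=0$) yields $d_{1,0}\alpha=(d_b\eta_1)\wedge\theta+(d_b\eta_2)\wedge\theta^{c}$. The key observation is that $d_{1,0}\alpha$ is itself leafwise harmonic, its leafwise part being built from $\theta,\theta^{c}$ with the leafwise-constant coefficients $d_b\eta_i$, whereas each $d_{0,1}\beta_i$ is leafwise exact; since on the compact leaves leafwise-harmonic and leafwise-exact forms are $L^2$-orthogonal, the convergence $d_{1,0}\alpha+d_{0,1}\beta_i\to0$ forces $d_{1,0}\alpha=0$, hence $d_b\eta_1=d_b\eta_2=0$. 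The same orthogonality argument applied to $\delta_{-1,0}\alpha+\delta_{0,-1}\gamma_i\to0$ gives $\delta_b\eta_1=\delta_b\eta_2=0$, so $\eta_1,\eta_2\in\mathcal{H}_b^u$. The assignment $\alpha\mapsto(\eta_1,\eta_2)$ is then an injective map $\mathcal{H}_2^{u,1}\to\mathcal{H}_b^u\oplus\mathcal{H}_b^u$, giving $\dim\mathcal{H}_2^{u,1}\le 2e_u$, which combined with \ref{Thm 1} yields the equality. I expect the main obstacle to be exactly these last two steps: making the Cauchy--Riemann/compactness argument rigorous, and checking that the leafwise Hodge orthogonality (integrated over the compact leaves against the bundle-like metric) is robust enough to survive the $L^2$-limits furnished by \ref{Lem 2}(ii).
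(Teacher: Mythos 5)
Your proposal is correct and takes essentially the same route as the paper: the same decomposition $\alpha=\eta_1\wedge\theta+\eta_2\wedge\theta^{c}$, the same use of compactness of the flat torus leaves (via leafwise harmonicity from \ref{Lem 2}(i)) to force $\eta_i$ basic, and the same use of the $L^2$-convergences in \ref{Lem 2}(ii) to force $d_{1,0}\alpha=0$ and $\delta_{-1,0}\alpha=0$, giving the reverse inequality that combines with \ref{Thm 1}. The only difference is presentational: where you invoke fiberwise Hodge orthogonality of leafwise-harmonic against leafwise-exact forms, the paper establishes exactly that orthogonality by an explicit Fubini computation in a foliated chart around a regular point, killing the cross term by periodicity in the leaf coordinates and controlling the bundle-like volume density by a uniform lower bound, then extending from the dense set of regular points by smoothness.
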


\begin{proof} We prove
that if $\alpha \in \mathcal{H}^{u,1}$ then it is possible to decompose it as
\begin{equation}
\alpha =\eta _1\wedge \theta +\eta _2\wedge \theta^{c} , \label{alpha_from_delta}
\end{equation}
with $\eta _1,\eta _2$ basic harmonic forms with respect to the basic Laplace operator $\Delta_b$.

The proof is divided in two steps: first we obtain $\eta_i \in \Omega_b^u$, $1\le i \le 2$, then we show that
$\eta_i$ are also harmonic.

{\noindent \bf Step 1: $\eta_i$ are basic.} Let  $\alpha \in \mathcal{H}_2^{u,1}$ be fixed arbitrarily. From   \ref{Lem 2} (ii) we have
\begin{equation}
d_{0,1}\alpha =0,\qquad \delta _{0,-1}\alpha =0.  \label{d_delta_0_1_0}
\end{equation}
We then decompose $\alpha $ as
\[
\alpha =f_1\tilde\eta _1\wedge \theta+f_2\tilde\eta _2\wedge
\theta^c,
\]
with $\tilde\eta _i\in \Omega _b^u$ basic forms for $i=1,2$, and
$f_i\in \cac^\infty (M)$. As the foliation is quasi-regular, all leaves will be
compact, diffeomorphic with the real 2-torus $T^2$.

{\noindent \bf The proof for ${u=0}$.} We have
\[
\alpha =f_1\theta+f_2\theta^c.
\]

Note that the above functions $f_i$ are constant on the leaves of $\caf$, and hence they are basic. Indeed,  $d_{0,1}\alpha =0$, $\delta _{0,-1}\alpha =0$ imply $\Delta_{\mathcal{F}}\alpha=0$, as $\Delta_{\mathcal{F}}$ is the Laplace operator on the torus $T^2$ endowed with flat Euclidean metric. If we fix a leaf $L$,
then $\alpha _L:=\alpha _{\mid L}$ will be a harmonic 1-form and as $\theta\restrict{L}$,
$\theta^c\restrict{L}$ are also harmonic (as $\theta$ is parallel), then $f_1\restrict{L}$, $f_2\restrict{L}$ are constant, as
$\dim \operatorname{Ker}\Delta_{\mathcal{F}\mid L}=2$.

Moreover, we have
\begin{equation}
\begin{split} \label{eq alpha 1}
V(f_1) &=U(f_2),   \\
U(f_1) &=-V(f_2),
\end{split}
\end{equation}
which are equivalent with the conditions
\begin{equation}
\begin{split} \nonumber
d_{0,1}\alpha &=(\theta\wedge \nabla _U^{T\mathcal{F}}+\theta^c\wedge
\nabla _V^{T\mathcal{F}})\alpha =0, \\
\delta _{0,-1}\alpha &=(-\iota _U\nabla _U^{T\mathcal{F}}-\iota_V\nabla_V^{T\mathcal{F}})\alpha =0.
\end{split}
\end{equation}
{\noindent \bf The proof for $u\geq 1$.} Write now
\begin{equation}
\begin{split} \nonumber
d_{0,1}\alpha &=\tilde{\eta} _1\wedge V(f_1)\theta^c\wedge \theta+\tilde{\eta} _2\wedge U(f_2)\theta\wedge \theta^c \\
&=(U(f_2)\tilde{\eta} _2-V(f_1)\tilde{\eta} _1)\wedge \theta\wedge \theta^c,
\end{split}
\end{equation}
and
\[
\eta _{0,-1}\alpha =-U(f_1)\tilde{\eta} _1-V(f_2)\tilde{\eta} _2.
\]

We show that the differential forms  $f_i\tilde{\eta} _i$
are basic. As above, we fix a leaf $L$ and we suppose   ${\tilde{\eta}_i}\restrict{L}\neq 0$. As $\tilde{\eta} _i\in
\Omega _b^{u,0}$, from \eqref{d_delta_0_1_0} we have
\[
{\tilde{\eta}_2}\restrict{L}=a{\tilde{\eta}_1}\restrict{L},
\]
with $a\in \mathbb{R}$, and we obtain \eqref{eq alpha 1} for $f_1\restrict{L}$, $af_2\restrict{L}$.
Then $f_i$ are basic, as well as $\eta_i$.

{\noindent\bf Step 2: $\eta_i$ are harmonic with respect to the basic Laplacian.}
Let $\beta \in \Omega ^{u+1,0}$. We study the convergence
\[
d_{1,0}\alpha +d_{0,1}\beta _i\longrightarrow 0
\]
in $L^2$  around a regular point $x\in M$. As the subset of
regular points is open and dense (see \emph{e.g.} \cite[Chapter 3]{Mo}), taking a transversal $\mathcal{T}$ small enough
we can consider a foliated map
$(\mathcal{U},\varphi )$, $x\in \mathcal{U}$, such that $\mathcal{U}\simeq
\mathcal{T}\times T^2$, all leaves in $\mathcal{U}$ being regular, diffeomorphic to $T^2$.

We introduce transverse coordinates $y=(y^1,\ldots,y^{2n})$ and leafwise
coordinate $(t,s)$, such that $\varphi (\mathcal{U})=\mathcal{O}\times (0,2\pi )\times
(0,2\pi )$, with $\mathcal{O}\subseteq \mathbb{R}^{2n}$ and such that
\begin{equation}
U=c_1\frac \partial {\partial t},\qquad V=c_2\frac \partial {\partial s},
\label{U_V_dt_ds}
\end{equation}
with $c_1$, $c_2$ real constants.

As the leaves have trivial holonomy and the metric is bundle-like, sliding
along leaves we can construct on $\mathcal{U}$ a transverse orthonormal
basis $\{e_i\}_{1\le i\le q}$ such that $\{e_1^{\flat }\}\in \Omega _b^1$,
with $\{e_1^{\flat }\}$ basic forms. As above, for local computation we
consider the dual basis $\{e_1^{\flat },\ldots,e_{2n}^{\flat },\theta, \theta^c\}$.

Locally, we can write
\begin{equation}
\begin{split} \nonumber
\beta =g^{i_1\ldots i_{u+1}}e_{i_1}^{\flat }\wedge \cdots\wedge
e_{i_{u+1}}^{\flat }
=g^Ie_I^{\flat },
\end{split}
\end{equation}
for $1\le i_1,\ldots,i_{u+1}\le q$, and the multi-index $I:=(i_1,\ldots,i_{u+1})$,
with $e_I^{\flat }:=e_{i_1}^{\flat }\wedge \cdots\wedge e_{i_{u+1}}^{\flat }$.

Using \eqref{U_V_dt_ds}, we obtain
\begin{equation}
\begin{split} \label{d_1_0_beta_local}
d_{0,1}\beta &=c_1\frac{\partial g^I}{\partial t}\theta\wedge
e_I^{\flat }+c_2\frac{\partial g^I}{\partial s}\theta^c\wedge e_I^{\flat} \\
&=\frac{\partial {g}_1^I}{\partial t}e_I^{\flat }\wedge \theta+
\frac{\partial {g}_2^I}{\partial s}e_I^{\flat }\wedge \theta^c,
\end{split}
\end{equation}
with ${g}_i^I:=(-1)^{u+1}c_i\cdot g^I$, $i=1,2$.

From \eqref{alpha_from_delta} and \eqref{conex_forme_Vaisman} we get
\begin{equation}
d_{1,0}\alpha ={h}_1^I e_I^{\flat }\wedge \theta
+{h}_2^Ie_I^{\flat }\wedge \theta^c,  \label{d_1_0_alpha_local}
\end{equation}
As $d_{1,0}{\delta }_i$, $e_J^{\flat }$ are basic differential forms,
$h_i^I$ are basic functions, $i=1,2$.

In the sequel we check the $L^2$ convergence stated in  \ref{Lem 2} (ii) on the local chart $\mathcal{U}$.
From \eqref{d_1_0_beta_local} and \eqref{d_1_0_alpha_local}, we have
\begin{equation}
\begin{split} \nonumber
\mathcal{I} &:=\int\limits_{\mathcal{U}} \left\| d_{1,0}\alpha
+d_{0,1}\beta \right\| ^2d\mu _g \\
&=\sum_I\int \limits_{\varphi(\mathcal{U})}\left( {h}_1^I
+\frac{\partial {g}_1^I}{\partial t}\right)^2+\left( {h}_2^I
+\frac{\partial {g}_2^I}{\partial s}\right)^2\sqrt{\det G}\,ds\,dt\,dy,
\end{split}
\end{equation}
where $G$ is the matrix of the metric $g$ with respect to the local chart,
$d\mu _g$ is the volume form canonically associated to the metric $g$, $dy$ corresponding to the transverse coordinates.

We fix now $I$. Clearly the Riemannian metric $g$ is non-degenerate and $\mathcal{U}$ can be chosen relatively compact.
Then we can find a constant $c$ such that
\[
\sqrt{\det G}>c>0  \label{G_c}
\]
on $\mathcal{U}$. Then, as ${h}_1^I$ is basic (so it does not depend
on $t$ and $s$), using the Fubini formula, we have
\begin{equation}
\begin{split} \nonumber
\int \limits_{\varphi(\mathcal{U})}\left( {h}_1^I+\frac{\partial
{g}_1^I}{\partial t}\right) ^2\sqrt{\det G}\, ds\,dt\,dy &>c4\pi^2\int\limits_{\mathcal{O}}({h}_1^I)^2dy \\
&+2c\int\limits_{\mathcal{O}}{h}_1^I\int\limits_0^{2\pi }
\left(\int\limits_0^{2\pi }\frac{\partial {g}_1^I}{\partial t}dt\right)ds\,dy \\
&+c\int \limits_{\varphi(\mathcal{U})}\left( \frac{\partial {g}_1^I}
{\partial t}\right)^2ds\,dt\,dy.
\end{split}
\end{equation}

Because on the torus ${g}_1^I(0)={g}_1^I(2\pi )$, the second term vanishes
and the third is positive, then arguing in a similar manner, for arbitrary $I$, we can write

\[
\left\| d_{1,0}\alpha +d_{0,1}\beta \right\| _{L^2}^2 \ge \mathcal{I}
>4\pi^2c\sum_I\int\limits_\mathcal{O}(({h}_1^I)^2+({h}_2^I)^2)dy.
\]

As the last expression depends only on $\alpha $ and on $\mathcal{U}$, we can have
a sequence $\beta _i\in \Omega ^{u+1,0}$ such
that
\[
d_{1,0}\alpha +d_{0,1}\beta _i\longrightarrow 0\mbox{\, in \,}L^2
\]
if and only if ${h}_i^I=0$ on $\mathcal{U}$, for any $I$ and $1\le i\le 2$.
All mathematical objects that we use are of type $\cac^\infty $, $x$ is a
regular point arbitrarily chosen and the set of regular points is dense.
Consequently we obtain the desired relation
\begin{equation}
d_{1,0}\mathbf{\delta }_i=0  \label{d 0 1 delta}
\end{equation}
on $M$.

We still have to prove $\delta _{-1,0}\mathbf{\delta }_i=0$. We proceed in a
similar way as above.

Notice that
\begin{equation}
\begin{split} \nonumber
\delta _{-1,0}\alpha &=\delta _{-1,0}\eta_1\wedge \theta
+\delta _{-1,0}\eta_2\wedge \theta^c \\
&={h}_1^Ie_I^{\flat }\wedge \theta+{h}_2^Ie_I^{\flat}\wedge \theta^c,
\end{split}
\end{equation}
with $I:=(i_1,\ldots,i_{u-1})$, $1\le i_1,\ldots,i_{u-1}\le 2n$ and ${h}_i^I$
being basic functions.

We consider $\gamma \in \Omega ^{u-1,2}$; on $\mathcal{U}$ we write
\[
\gamma =g^Ie_I^{\flat }\wedge \theta\wedge \theta^c.
\]
Then
\begin{equation}
\begin{split} \nonumber
\delta _{0,-1}\gamma &=(-1)^{u-1}\bigl( c_1U(g^I)e_I^{\flat }\wedge
\theta^c-c_2V(g^I)e_I^{\flat }\wedge \theta)\bigr) \\
\ &=\frac{\partial {g}_1^I}{\partial s}e_I^{\flat }\wedge \theta
+\frac{\partial {g}_2^I}{\partial t}e_I^{\flat }\wedge \theta^c.
\end{split}
\end{equation}
and, consequently
\begin{equation}
\begin{split} \nonumber
\mathcal{J} &:=\ \int \limits_{\mathcal{U}}\left\| \delta _{-1,0}\alpha
+\delta_{0,-1}\gamma \right\| ^2d\mu _g \\
\ &=\sum_I\int \limits_{\varphi(\mathcal{U})}\left( {h}_1^I
+\frac{\partial {g}_1^I}{\partial s}\right) ^2+\left( {h}_2^I
+\frac{\partial {g}_2^I}{\partial t}\right) ^2\sqrt{G}\,ds\,dt\,dy.
\end{split}
\end{equation}

As before, we get
\[
\left\| \delta _{-1,0}\alpha +\delta _{0,-1}\gamma \right\|_{L^2}^2\ge
\mathcal{J}>0,
\]
and we obtain ${h}_i^I=0$ on $\mathcal{U}$ for any $I$. From here,
arguing as above,
\begin{equation}
\delta _{-1,0}\mathbf{\delta }_i=0.  \label{delta 1 0 delta}
\end{equation}

From \eqref{d 0 1 delta} and \eqref{delta 1 0 delta} it results that
$\mathbf{\delta }_1$, $\mathbf{\delta }_2$ are basic harmonic forms with
respect to the basic operator $\Delta _b$. Using \eqref{E_H}

\[
\dim E_2^{u,1}=\dim H_2^{u,1}=2\,  e_u,
\]
and the theorem is proved.
\end{proof}

\section{Examples}\label{section V}

In this final section we present two foliated manifolds. The first one is the
canonical foliation of a diagonal Hopf surface, for which we apply  \ref{Thm 1} and \ref{Thm 2} to explicitly compute  the dimension of all spectral terms of order 2. The second example is  a class of foliations of
arbitrary large transverse dimension which does not come from a Vaisman
structure, in accordance with  \ref{criterii}.

\subsection{The diagonal Hopf surface} \label{ex 1} Let  $W:=\mathbb{C}^2\backslash \{0\}$ and $a$, $b\in
\mathbb{C}\backslash \{0\}$ and let $\gamma :W\rightarrow W$ be given by
\[
\gamma :\left( z_1,z_2\right) \longmapsto \left( az_1,bz_2\right).
\]
The quotient
\[
H_{a,b}:=W/\gamma ,
\]
is a Hopf surface, \cite{Gau-Orn, Par}. For any $\left(
z_1,z_2\right) \in W$ there is an unique real number $\phi $ such that
\[
\left| z_1\right| ^2\left| a\right| ^{-2\phi }+\left| z_2\right| ^2\left|
b\right| ^{-2\phi }=1.
\]
The map $\tilde \psi $ defined by
\[
\tilde \psi :\left( z_1,z_2\right) \longmapsto (\phi \mbox{\,mod\,}\mathbb{Z},z_1a^{-\phi },z_2b^{-\phi })
\]
is compatible with $\gamma $ and the quotient map $\psi :=$ $\tilde \psi
/\gamma $ establishes a homeomorphism between $H_{\alpha ,\beta }$ and the
product of spheres $S^1\times S^3$, \cite{Gau-Orn}. The Hermitian
form
\[
\Omega :=-\1\frac 14\frac{\partial\bar\partial\Phi }\Phi,\,\, \text{with}\,\, \log\Phi :={(\log \left| a \right| +\log \left| b \right|)\phi },
\]
produces  a Vaisman metric $g$ on  $H_{\alpha,\beta }$,
with $g(\cdot ,\cdot ):=-\Omega (J\cdot ,\cdot )$.
The topological properties of the leaves of the canonical foliation are
investigated in \cite{Par}, where the author proves that all leaves are
compact (and consequently the foliation is quasi-regular) if and only if
there are positive integers $c$, $d$ such that $a^c=b^d$.

We now compute the dimensions of the terms of order 2 of the spectral sequence. The Betti numbers of
the Hopf surface are
\[
b_0=1,\quad b_1=1,\quad b_2=0,\quad b_3=1,\quad b_4=1.
\]
From \cite{Vai},  the basic Betti numbers are:
\[
e_0=1,\quad e_1=0,\quad e_2=1.
\]
By our lower estimate of $\dim E_2^{u,1}$ and the Poincar\'e duality, we obtain:

\begin{proposition}
The dimension of the terms of the second order of the canonical Vaisman
foliation associated to $H_{\alpha ,\beta }$ satisfy:
\begin{equation}
\begin{split} \label{ineq_spectr}
\dim E_2^{u,0}&=\dim E_2^{u,2}=\frac{1+(-1)^u}2,   \\
\dim E_2^{u,1}&\ge 1+(-1)^u.
\end{split}
\end{equation}
\end{proposition}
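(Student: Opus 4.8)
The plan is to assemble three facts already in hand and then check two elementary arithmetic identities; no new analysis is required. Specifically, I would use the identification of the bottom row of the page $E_2$ with basic cohomology, Poincar\'e duality for the basic complex of a minimal foliation, and the lower bound of \ref{Thm 1}.

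First I would recall that for the canonical foliation one has $\dim E_2^{u,0}=\dim H_b^u=e_u$. The basic Betti numbers, computed from the Hopf-surface Betti numbers via \cite[Theorem 4.2]{Vai}, are $e_0=1$, $e_1=0$, $e_2=1$. Since the expression $\frac{1+(-1)^u}{2}$ equals $1$ for even $u$ and $0$ for odd $u$, it agrees with $e_u$ throughout the range $0\le u\le 2n=2$; this gives $\dim E_2^{u,0}=\frac{1+(-1)^u}{2}$. Next, because the canonical foliation is minimal (its mean curvature vanishes, cf. \ref{mean curv e 0}), the basic de\thinspace Rham complex satisfies Poincar\'e duality (see \cite[Chapter 7]{Ton}, \cite{Mas}), whence $\dim E_2^{u,2}=\dim E_2^{u,0}$. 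Together these yield the first displayed line of \eqref{ineq_spectr}.

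Finally, the lower estimate of \ref{Thm 1} reads $\dim E_2^{u,1}\ge 2e_u$, and since $2e_u=1+(-1)^u$ for $u=0,1,2$, the second line follows immediately. I do not anticipate any real obstacle: the proposition is a direct specialization of the general results, and the only care needed is matching the index $u$ against the admissible range and confirming the two identities $e_u=\frac{1+(-1)^u}{2}$ and $2e_u=1+(-1)^u$. As a remark, when $a^c=b^d$ for some positive integers $c,d$ the canonical foliation is quasi-regular, so \ref{Thm 2} would sharpen the inequality to the equality $\dim E_2^{u,1}=1+(-1)^u$.
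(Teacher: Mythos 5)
Your proposal is correct and matches the paper's own argument essentially verbatim: the paper likewise combines $\dim E_2^{u,0}=e_u$ with the basic Betti numbers $e_0=1$, $e_1=0$, $e_2=1$ from \cite[Theorem 4.2]{Vai}, the Poincar\'e duality $\dim E_2^{u,0}=\dim E_2^{u,2}$ for the minimal canonical foliation (\cite{Mas}, \cite[Chapter 7]{Ton}), and the lower bound $\dim E_2^{u,1}\ge 2e_u$ of \ref{Thm 1}, with the same arithmetic identification $e_u=\frac{1+(-1)^u}{2}$ on the range $0\le u\le 2$. Your closing remark about equality in the quasi-regular case $a^c=b^d$ via \ref{Thm 2} is also exactly the remark the paper makes after the proposition.
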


\begin{remark}
If $a^c=b^d$ for $c$, $d\in \mathbb{N}\backslash \{0\}$, then the foliation is quasi-regular
and we get equality in the second line  of \eqref{ineq_spectr}.
\end{remark}

\subsection{A suspension of an odd-dimensional torus} \label{ex 2} We use an idea presented in \cite{Kac-Nic2} to
construct foliations of arbitrary large transverse dimension.

Consider the matrix
\[
A:=\left(
\begin{array}{ccccc}
d_1 & 1 & 1 & \cdots & 1 \\
1 & d_2 & 0 & \cdots & 0 \\
1 & 0 & d_3 & \cdots & 0 \\
\vdots & \vdots & \vdots & \ddots & \vdots \\
1 & 0 & 0 & \cdots & d_{2n+1}
\end{array}
\right)
\]
where $d_1=1$ and $d_k=1+d_1\cdot d_2 \cdot \ldots \cdot d_{k-1}$, for $2\ge k\ge 2n+1$. Then:
\begin{enumerate}
\item $A\in \SL(2n+1,\ZZ)$.
\item All its  eigenvalues $\lambda _i$ have multiplicity 1 and are uniformly
distributed in the intervals $(d_1,d_3)$, $(d_3,d_4),\ldots,(d_{2n+1},\infty)$.
\item The coordinates $\{v_i^1,\ldots,v_i^{2n+1}\}$ of each eigenvector $v_i$ are li\-ne\-ar\-ly independent over the field $\mathbb{Q}$.
\end{enumerate}
 As in  \cite{Car} for dimension $2$
and \cite{Dom} for dimension $3$, we take the semidirect product $G:=\mathbb{R\rtimes R}^{2n+1}$
and use $A$ to induce a Lie group structure on $G$ by the
multiplication
\[
(t_1,z_1)\cdot (t_2,z_2):=(t_1+t_2,A^{t_1}z_2+z_1).
\]
The eigenvectors $v_i$ become tangent vectors at the origin of this Lie group, and we denote by $\{V_i\}_{1\le i\le 2n+1}$ the left invariant vector fields
induced by $\{v_i\}$. Consider the frame $\{V_0:=\partial /\partial
t,V_i\}$. The corresponding coframe determined by the canonical left invariant metric is
denoted by $\{\alpha _0:=dt,\alpha _i\}$, ${1\le i\le 2n+1}$.

As  $A\in \SL(2n+1,\mathbb{Z})$,
the subgroup $\Gamma :=\mathbb{Z\rtimes Z}^{2n+1}$ is
cocompact.
The quotient manifold $T_A^{2n+2}:=\Gamma \diagdown G$ is a suspension of the torus
$T^{2n+1}:=\mathbb{R}^{2n+1}\diagup \mathbb{Z}^{2n+1}$ with respect to the automorphism canonically induced on the
torus by the matrix $A$, \cite{Kac-Nic1, Car}. For the construction of a suspension we indicate \cite[Chapter I]{Mo}.

Notice that the metric, the left invariant frame and coframe can be
projected on $T_A^{2n+2}$; for convenience we use the same
symbols to denote the projected objects.

We construct a foliation which is different from the classical foliation associated to a suspension. Precisely, the fields $V_{2n}$, $V_{2n+1}$ generate two flows which  induce a foliation of leafwise dimension $2$ and transverse dimension $2n$ on
$T_A^{2n+2}$. As in \cite{Car, Dom}, the above metric is
bundle-like.

\begin{proposition}
For the above foliation the spectral terms $E_2^{2n}$ and $E_2^{0,1}$ vanish,
\[
E_2^{2n}\equiv 0, \quad E_2^{0,1}\equiv 0.
\]

\end{proposition}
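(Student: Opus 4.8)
The final Proposition asks us to show that for the suspension foliation on $T_A^{2n+2}$ generated by $V_{2n}$, $V_{2n+1}$, both spectral terms $E_2^{2n}$ and $E_2^{0,1}$ vanish. The plan is to compute these terms directly from the explicit algebraic structure of the suspension, bypassing the Vaisman machinery. The key point is that this foliation is \emph{not} Vaisman (that is the whole purpose of the example, via Proposition \ref{criterii}), so we expect the lower bounds of Theorem \ref{Thm 1} to fail. First I would identify the basic forms explicitly: since the left-invariant coframe $\{\alpha_0, \alpha_1, \ldots, \alpha_{2n+1}\}$ descends to $T_A^{2n+2}$ and the leafwise directions are $V_{2n}, V_{2n+1}$, the transverse directions correspond to $\alpha_0, \alpha_1, \ldots, \alpha_{2n-1}$. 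A transverse form $\alpha_I = \alpha_{i_1} \wedge \cdots \wedge \alpha_{i_k}$ is basic precisely when it is annihilated by $\iota_{V_{2n}}, \iota_{V_{2n+1}}$ (automatic by degree considerations) and by $\iota_{V_{2n}} d, \iota_{V_{2n+1}} d$.

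\medskip

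\noindent\textbf{The term $E_2^{2n}$.} The term $E_2^{2n,0} = H_b^{2n}$ is the top-degree basic cohomology, which (for a taut foliation) detects tautness, while the top term $E_2^{2n}$ in the statement refers to the full collection over leafwise degree. I would first compute the structure constants of $G$ from the Lie group multiplication: since $(t_1,z_1)\cdot(t_2,z_2) = (t_1+t_2, A^{t_1}z_2 + z_1)$, the bracket $[V_0, V_i]$ is governed by $\log A$ acting on the eigenvectors, giving $[V_0, V_i] = (\log \lambda_i) V_i$ with $\lambda_i$ the eigenvalues of $A$. Consequently $d\alpha_i = -(\log\lambda_i)\, \alpha_0 \wedge \alpha_i$ and $d\alpha_0 = 0$. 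The crucial observation is that $\log\lambda_i \neq 0$ for all $i$ (since the $\lambda_i$ lie in the intervals $(d_1,d_3), \ldots$ with $d_1=1$, all eigenvalues exceed $1$, so their logarithms are strictly positive). This forces the top transverse basic form $\alpha_0 \wedge \alpha_1 \wedge \cdots \wedge \alpha_{2n-1}$ to fail to be closed in basic cohomology, or more precisely to be exact, killing $H_b^{2n}$. I would verify directly that any candidate generator of $E_2^{2n}$ is a coboundary under $d_1 = d_b$, using the fact that $\log\lambda_i$ are nonzero to invert the relevant coboundary relations.

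\medskip

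\noindent\textbf{The term $E_2^{0,1}$.} For $E_2^{0,1} \simeq \mathcal{H}_2^{0,1}$ I would use the Hodge-theoretic description \eqref{E_H}. A form in $\Omega^{0,1}$ has the shape $f\, \alpha_{2n} + g\, \alpha_{2n+1}$ with $f,g \in \cac^\infty(T_A^{2n+2})$. By Lemma \ref{Lem 2}, membership in $\mathcal{H}_2^{0,1}$ requires $d_{0,1}\alpha = 0$ and $\delta_{0,-1}\alpha = 0$ (forcing $f,g$ constant along leaves by the leafwise-harmonicity argument), together with the approximate conditions on $d_{1,0}\alpha$ and $\delta_{-1,0}\alpha$. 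The essential difference from the Vaisman case is that here the leafwise one-forms $\alpha_{2n}, \alpha_{2n+1}$ are \emph{not} closed: from $d\alpha_i = -(\log\lambda_i)\,\alpha_0\wedge\alpha_i$ we get $d_{1,0}\alpha_{2n} = -(\log\lambda_{2n})\,\alpha_0 \wedge \alpha_{2n} \neq 0$, in sharp contrast to Lemma \ref{Lema 3} where $d_{1,0}\theta = 0$. This nonvanishing prevents any nonzero constant-coefficient combination from surviving into $\mathcal{H}_2$, and I would show that the obstruction $d_{1,0}\alpha$ cannot be cancelled by any $d_{0,1}\beta$ in the $L^2$ limit, forcing $\mathcal{H}_2^{0,1} = 0$.

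\medskip

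\noindent\textbf{The main obstacle.} The hard part will be the $E_2^{0,1}$ computation, specifically establishing that the obstruction term $d_{1,0}\alpha$ genuinely lies outside $\overline{d_{0,1}}(\Omega^{1,0})$. The argument in Theorem \ref{Thm 2} runs in the favorable direction (starting from a harmonic representative and extracting basic data), but here I must run it in reverse: showing \emph{non-existence} of harmonic representatives. The cleanest route is to exhibit the contradiction on a single well-chosen leaf or via a global integration argument analogous to the Fubini estimate in Theorem \ref{Thm 2}, exploiting that $\log\lambda_{2n}$ and $\log\lambda_{2n+1}$ are nonzero irrationally-related constants (by property (3), the eigenvector coordinates are $\mathbb{Q}$-linearly independent, which guarantees the flows are minimal and rules out invariant basic representatives). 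I expect the $\mathbb{Q}$-linear independence to be exactly what forces the leafwise-constant functions $f,g$ to be globally constant, after which the nonclosedness of $\alpha_{2n}, \alpha_{2n+1}$ delivers the vanishing.
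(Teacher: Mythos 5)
There is a genuine gap, and it starts with a factual error about the eigenvalues. You claim that ``all eigenvalues exceed $1$, so their logarithms are strictly positive,'' but $A\in \SL(2n+1,\Z)$ forces $\prod_i\lambda_i=1$, hence $\sum_i\log\lambda_i=0$: the logarithms cannot all be positive (already for $n=1$ the characteristic polynomial is $x^3-6x^2+9x-1$, which has a root in $(0,1)$). This is not cosmetic, because for the $E_2^{2n}$ step the coefficient you must invert in the coboundary relation is not an individual $\log\lambda_i$ but the sum $\log\lambda_1+\cdots+\log\lambda_{2n-1}=-\log(\lambda_{2n}\lambda_{2n+1})$, and a sum of nonzero numbers can perfectly well vanish --- indeed the full sum over all $i$ does vanish. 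The paper's proof solves the periodic ODE $f'(t)+\log\lambda\, f(t)=h(t)$ with $\lambda:=\lambda_{2n}\lambda_{2n+1}$ and needs precisely $\lambda\neq 1$, which it extracts from the \emph{distribution} of the eigenvalues on the real line; your proposed criterion ``$\log\lambda_i\neq 0$'' is the right one only for the $E_2^{0,1}$ computation, where the two ODEs $f_i'=\log\lambda_{2n-1+i}f_i$ with periodic $f_i$ force $f_i\equiv 0$.

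The second gap is that you never supply a mechanism for passing from arbitrary smooth coefficients to invariant ones, which is the technical heart of the paper's argument. The paper first computes the entire page $E_1$ (formula \eqref{E_1}), and it is exactly there that property (3) enters, via Schmidt's Diophantine estimate $\left|\left\langle m,v_i\right\rangle\right|\ge C_i\left\| m\right\|^{-\delta_i}$, which permits smooth solution of the cohomological equation $V_i(f)=h-h_0$ and reduces all coefficients to periodic functions of $t$ alone; after that, both vanishing statements are elementary scalar ODE computations. Your alternative Hodge-theoretic route through $\mathcal{H}_2^{0,1}\simeq E_2^{0,1}$ is viable in principle and could even bypass the Diophantine estimate (one can pair $d_{0,1}\beta_i$ against the invariant form $d_{1,0}\alpha$, whose coefficients depend only on $t$, and kill the cross term using fibrewise averaging, since the leaves are flat with commuting generators and the closures of leaves are the fibre tori), but you explicitly leave this non-cancellation step --- the crux --- as an expectation. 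Note also two points where your sketch would derail if executed literally: the Fubini argument of \ref{Thm 2} cannot be copied here because the leaves are noncompact and dense in the fibre tori, so one must integrate over leaf closures rather than leaves; and your expectation that $\Q$-linear independence forces $f,g$ to be ``globally constant'' is wrong --- minimality only forces them to be functions of $t$, periodic, and it is then the ODE with $\lambda_{2n},\lambda_{2n+1}\neq 1$, not constancy, that makes them vanish.
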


\begin{proof} To compute $E_2^{2n}$ and $E_2^{0,1}$ we use an argument
similar to the one in \cite{Dom} (see also \cite{Car}).

We first look at $E_1^{u,v}$, $0\le u\le 2n$, $0\le v\le 2$. As $d_{0,1}\alpha _i=0$,
these terms decompose as
\begin{equation}
E_1^{u,v}=\bigwedge^u(\alpha _0,\ldots,\alpha _{2n-1})\otimes E_i^{0,v}.
\label{E_1_times}
\end{equation}
As $v^j_i$ are independent over $\Q$,   there exist some constants $C_i$ and $\delta _i$ such that the
following Diophantine condition is satisfied \cite[II.4.]{Sch}.
\[
\left| \left\langle m,v_i\right\rangle \right| \ge \frac{C_i}{\left\|
m\right\| ^{\delta _i}},
\]
for any $m\in \mathbb{Z}^{2n+1}\backslash \{0\}$ and any $1\le i\le 2n+1$.
Then, for any function
\[
h:\mathbb{R\times R}^{2n+1}\rightarrow \mathbb{R} \quad \text{with}\quad  h(t,z)=h(t+1,A(z))
\]
there exists a smooth function $f$ with the same properties such that
\[
V_i(f)=h-h_0,
\]
with $h_0\equiv h_0(t)=h_0(t+1)$. As in \cite{Dom}, using \eqref{E_1_times}
we obtain
\begin{equation}
E_1^{u,v}=\bigwedge^u(\alpha _0,\ldots,\alpha _{2n-1})\otimes
\bigwedge^v(\alpha _{2n,}\alpha _{2n+1})\otimes \Omega ^0(S^1).  \label{E_1}
\end{equation}
We use $E_2=H(E_1,d_1)$ to compute the dimension of $E_2^{2n,0}$.
Let $\alpha \in E_1^{2n,0}$. From \eqref{E_1} we can write
\[
\alpha =h(t)\alpha _0\wedge \cdots\wedge \alpha _{2n-1},
\]
with $h(t)=h(t+1)$. As in \cite[Proposition 2]{Car}, we construct a
differential form $\alpha ^{\prime }\in E_1^{2n-1,0}$ such that $d_1\alpha^{\prime }=\alpha $,
$d_1$ being  the basic de\thinspace Rham operator. We
choose
\[
\alpha ^{\prime }=f(t)\alpha _1\wedge \cdots\wedge \alpha _{2n-1}.
\]
Then we compute:
\begin{equation}
\begin{split} \nonumber
d_1\alpha ^{\prime } &=(f^{\prime }(t)-(\log \lambda _1+\cdots
+\log \lambda_{2n-1})f(t))\alpha _0\wedge\cdots\wedge \alpha _{2n-1} \\
&=(f^{\prime }(t)+\log \lambda f(t))\alpha _0\wedge\cdots\wedge \alpha _{2n-1},
\end{split}
\end{equation}
where $\lambda :=\lambda _{2n}\cdot \lambda _{2n+1}$. By the
distribution of the eigenvalues of $A$ on the real axis,  $\lambda
\not =1$, and the differential equation
\[
f^{\prime }(t)+\log \lambda f(t)=h(t)
\]
has the solution
$$f(t)=\lambda ^{-t}(k+\int_0^t\lambda ^xh(t)dx).$$
Choosing
$$k=\frac 1{\lambda -1}\int_0^1\lambda ^xh(t)dx,$$
 leads to
$f(t)=f(t+1)$. Then $E_2^{2n,0}\equiv 0$, and the foliation is taut.

Finally, we determine the kernel of the operator $d_1: E_2^{0,1}\rightarrow E_2^{1,1}$.
If $\alpha \in E_1^{0,1}$, then
\[
\alpha =f_1\alpha _{2n}+f_2\alpha _{2n+1},
\]
and $f_i\equiv f_i(t)=f_i(t+1)$. As above,
\begin{equation}
d_1\alpha =(f_1^{\prime }(t)-\log \lambda _{2n}f_1(t))\alpha _{2n}
+(f_2^{\prime }(t)-\log \lambda _{2n+1}f_2(t))\alpha _{2n+1}. \nonumber
\end{equation}
From $f_i^{\prime }(t)-\log \lambda _{2n-1+i}f_i(t)=0$, as the functions are
periodic, we get $f_i\equiv 0$, $1\le i\le 2$. Then $E_2^{0,1}\equiv 0$.
\end{proof}

\hfill

Using  \ref{criterii}, we obtain:

\begin{corollary}
The above foliation on $T^{2n+2}_A$ does not come from a Vaisman structure.
\end{corollary}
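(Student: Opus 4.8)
The plan is to obtain this Corollary as an immediate application of the obstruction \ref{criterii}, fed with the spectral data supplied by the Proposition just proved. First I would verify that the suspension $T_A^{2n+2}=\Gamma\diagdown G$ meets the standing hypotheses of \ref{criterii}: it is a closed manifold (because $\Gamma$ is cocompact in $G$), the projected left-invariant fields $V_{2n},V_{2n+1}$ span an integrable $2$-plane distribution and hence a foliation $\caf$ of leafwise dimension $2$ and transverse dimension $2n$, and the projected left-invariant metric is bundle-like, exactly as recorded in the construction. This places us in the setting in which the terms $E_2^{u,v}$ and the basic Betti numbers $e_u$ are defined.

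Second, I would feed in the two vanishing statements just established, $E_2^{2n}\equiv 0$ and $E_2^{0,1}\equiv 0$. Reading $E_2^{2n}$ as the top transverse term $E_2^{2n,0}$, the first says $e_{2n}=\dim E_2^{2n,0}=0$, and the second says $\dim E_2^{0,1}=0<2$. Each of these is, on its own, exactly one of the two sufficient conditions in \ref{criterii}; so the Proposition applies and immediately gives that $\caf$ is not the canonical foliation of any Vaisman structure on $T_A^{2n+2}$.

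It is worth recording why \ref{criterii} bites, since this is the only place any real argument lives. If $\caf$ were induced by a Vaisman structure, then by \ref{Thm 1} we would have $\dim E_2^{0,1}\ge 2\,e_0=2$, using $e_0=\dim H_b^0=1$ for a connected transversal; moreover the canonical foliation of a compact Vaisman manifold is minimal, hence taut, so that $e_{2n}=\dim E_2^{2n,0}\neq 0$. Both conclusions contradict the computed vanishings. I would also point out that the terms $E_2^{u,v}$ are invariants of the Riemannian foliated structure alone, so the computation does not depend on whether $T_A^{2n+2}$ carries a complex structure; such a structure would be forced only by the hypothetical Vaisman metric one is excluding, which is what lets the contradiction go through regardless.

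The main obstacle is therefore not in this deduction, which is in essence a single citation, but in the Proposition preceding the Corollary, which produces $E_2^{2n,0}=0$ and $E_2^{0,1}=0$; with those in hand the Corollary is automatic. The only point demanding care is the bookkeeping that the quantities named $E_2^{2n}$ and $E_2^{0,1}$ in that Proposition are indeed the $\dim E_2^{2n,0}$ and $\dim E_2^{0,1}$ entering \ref{criterii}, which they are.
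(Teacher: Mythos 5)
Your proposal is correct and takes essentially the same route as the paper, which derives the corollary as an immediate application of \ref{criterii} to the two vanishings $\dim E_2^{2n,0}=0$ and $\dim E_2^{0,1}=0<2$ established in the preceding proposition (either one alone suffices). Your extra remarks --- why \ref{criterii} bites via the lower bound $\dim E_2^{0,1}\ge 2e_0=2$ of \ref{Thm 1} and the tautness of the canonical foliation, the foliated-homeomorphism invariance of the $E_2$ terms making the complex structure irrelevant to the computation, and the identification of the proposition's ``$E_2^{2n}$'' with $E_2^{2n,0}$ --- simply make explicit what the paper leaves implicit in its one-line citation.
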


\begin{remark}
Any two vector fields from the set ${V_1,\ldots,V_{2n+1}}$ can be used to construct the foliation. However, if  $\frac{\partial}{\partial t},J(\frac{\partial}{\partial t}) \in T\caf$, where $J$ is the complex structure,
then the existence of a Vaisman structure on the above suspension of the torus would imply the existence of a Sasaki structure on the odd dimensional torus which is the fibre of the suspension. But it is known that odd-dimensional tori do not admit Sasaki structures (in fact, not even K-contact structures, see \cite{bg}, \cite{It}). This means that our obstruction becomes important only when {\em $\frac{\partial}{\partial t},J(\frac{\partial}{\partial t})$ are transverse to the foliation}.
\end{remark}

\begin{remark}
On the other hand $T_A^{2n+2}$ is orientable, and the differential
one-form $\omega _0$ $\equiv dt$ is closed but not exact, so $b_1>0$.
Then, the existence of a Vaisman structure on our manifold cannot be precluded using
this Betti number (see the Introduction). Consequently, in some instances,
the terms of the spectral sequence may be a useful obstruction in deciding wether a 2 dimensional
foliated structure on a closed manifold is generated by a Vaisman structure or not.
\end{remark}

\hfill
\hfill

{\noindent\bf Acknowledgment.}
We acknowledge useful discussions with J. \'Alvarez L\'opez concerning the geometrical meaning of the spectral term $E_2^{0,1}$.

\hfill

\hfill

{\small

\noindent {\sc Liviu Ornea\\
University of Bucharest, Faculty of Mathematics, \\14
Academiei str., 70109 Bucharest, Romania. \emph{and}\\
Institute of Mathematics ``Simion Stoilow" of the Romanian Academy,\\
21, Calea Grivitei Str.
010702-Bucharest, Romania }\\
\tt lornea@fmi.unibuc.ro, \ \ liviu.ornea@imar.ro

\hfill

\noindent{\sc Vladimir Slesar\\
Department of Mathematics, University of Craiova,\\
13 Al.I. Cuza Str., 200585-Craiova, Romania}\\
\tt slesar.vladimir@ucv.ro
}

\end{document}